\newcommand{\sm}[2]{\scaleto{#1\mathstrut}{#2pt}}
\newcommand{\msh}{\!\!\:}
\definecolor{Fcolor}{rgb}{0,0.5,0.25}
\newcommand{\ignore}[1]{}
\newcommand{\tr}{\mathrm{tr}}
\newcommand{\T}{^{\!\top}}
\newcommand{\Tp}{^{\!\phantom{\top}}}
\newcommand{\la}{\langle}
\newcommand{\ra}{\rangle}
\newcommand{\abf}{\boldsymbol{a}}
\newcommand{\Abf}{\boldsymbol{A}}
\newcommand{\bbf}{\boldsymbol{b}}
\newcommand{\Bbf}{\boldsymbol{B}}
\newcommand{\Dbb}{\mathbb{D}}
\newcommand{\ebf}{\boldsymbol{e}}
\newcommand{\Ecal}{\mathcal{E}}
\newcommand{\Fbf}{\boldsymbol{F}}
\newcommand{\Fcal}{\mathcal{F}}
\newcommand{\Frm}{\mathrm{F}}
\newcommand{\Gbf}{\boldsymbol{G}}
\newcommand{\Gcal}{\mathcal{G}}
\newcommand{\Hbf}{\boldsymbol{H}}
\newcommand{\Ibf}{\boldsymbol{I}}
\newcommand{\Ical}{\mathcal{I}}
\newcommand{\Ncal}{\mathcal{N}}
\newcommand{\Ocal}{\mathcal{O}}
\newcommand{\Ob}{\boldsymbol{O}}
\newcommand{\obf}{\boldsymbol{o}}
\newcommand{\Pcal}{\mathcal{P}}
\newcommand{\Rbb}{\mathbb{R}}
\newcommand{\Rcal}{\mathcal{R}}
\newcommand{\Sbb}{\mathbb{S}}
\newcommand{\Scal}{\mathcal{S}}
\newcommand{\Tbf}{\boldsymbol{T}}
\newcommand{\Ubf}{\boldsymbol{U}}
\newcommand{\Ucal}{\mathcal{U}}
\newcommand{\Vcal}{\mathcal{V}}
\newcommand{\wbf}{\boldsymbol{w}}
\newcommand{\wht}{\hat{\boldsymbol{w}}}
\newcommand{\xbf}{\boldsymbol{x}}
\newcommand{\Xbf}{\boldsymbol{X}}
\newcommand{\xst}{\accentset{\ast}{\xbf}}
\newcommand{\Xst}{\accentset{\ast}{\Xbf}}
\newcommand{\ybf}{\boldsymbol{y}}
\newcommand{\Ybf}{\boldsymbol{Y}}
\newcommand{\Ych}{\check{\Ybf}}
\newcommand{\Yst}{\accentset{\ast}{\Ybf}}
\newcommand{\Zbf}{\boldsymbol{Z}}
\newcommand{\nubf}{\boldsymbol{\nu}}
\newcommand{\taubf}{\boldsymbol{\tau}}
\newcommand{\nuht}{\hat{\nu}}
\newcommand{\Lambdabf}{\boldsymbol{\Lambda}}
\DeclareRobustCommand{\cev}[1]{%
  \mathpalette\do@cev{#1}%
}
\newcommand{\do@cev}[2]{%
  \fix@cev{#1}{+}%
  \reflectbox{$\m@th#1\vec{\reflectbox{$\fix@cev{#1}{-}\m@th#1#2\fix@cev{#1}{+}$}}$}%
  \fix@cev{#1}{-}%
}
\newcommand{\fix@cev}[2]{%
  \ifx#1\displaystyle
    \mkern#2 1mu
  \else
    \ifx#1\textstyle
      \mkern#2 3mu
    \else
      \ifx#1\scriptstyle
        \mkern#2 2mu
      \else
        \mkern#2 2mu
      \fi
    \fi
  \fi
}
\begin{document}

\title{Parabolic Relaxation for Quadratically-constrained Quadratic Programming -- Part I: 
Definitions \& Basic Properties
	\thanks{This work is in part supported by the NSF Award 1809454. Alper Atamt\"{u}rk is supported, in part, by grant FA9550-10-1-0168 from the Office of the Assistant Secretary of Defense for Research \& Engineering and the NSF Award 1807260.}
}


\author{Ramtin Madani \and Mersedeh Ashraphijuo \and Mohsen Kheirandishfard \and Alper Atamt\"{u}rk}

\institute{Ramtin Madani \and Mersedeh Ashraphijuo \at
	Department of Electrical Engineering, University of Texas, Arlington\\
	\email{ramtin.madani@uta.edu, mersedeh.ashraphijuo@uta.edu}           
	\and
	Mohsen Kheirandishfard \at
	Cognitiv \\
	\email{mohsen.kheirandishfard@gmail.com}
	\and
	Alper Atamt\"{u}rk \at
	Department of Industrial Engineering \& Operations Research, University of California, Berkeley \\
	\email{atamturk@berkeley.edu}   
}

\date{Received: date / Accepted: date}

\maketitle


\begin{abstract}
For general quadratically-constrained quadratic programming (QCQP), we propose a {\it parabolic relaxation} described with convex quadratic constraints. An interesting property of the parabolic relaxation is that the original non-convex feasible set is contained on the boundary of the parabolic relaxation. Under certain assumptions, this property enables one to recover near-optimal feasible points via objective penalization. Moreover, through an appropriate change of coordinates that requires a one-time computation of an optimal basis, the easier-to-solve parabolic relaxation can be made as strong as a semidefinite programming (SDP) relaxation, which can be effective in accelerating algorithms that require solving a sequence of convex surrogates. The majority of theoretical and computational results are given in the next part of this work \cite{parabolic_part2}. 
	
	\keywords{Parabolic relaxation \and Convex relaxation \and Quadratically-constrained quadratic programming \and Non-convex optimization}
	\PACS{87.55.de}
	\subclass{65K05 \and 90-08 \and 90C26 \and 90C22}
\end{abstract}

\section{Introduction}

In this paper, we study the minimization of a quadratic objective function over a feasible set that is also defined by quadratic functions. This problem is referred to as quadratically-constrained quadratic programming (QCQP). Physical laws and characteristics of dynamical systems are widely modeled using quadratic functions. Additionally, the general class of polynomial optimization problems can be reduced to QCQP. Consequently, QCQP arises in a wide range of scientific and engineering applications, such as power systems 
\cite{madani2016promises,madani2015convex,madani2015convexification,madani2014low}, 
imaging science 
\cite{bandeira2014tightness,candes2015phase,fogel2013phase,singer2011angular}, 
signal processing \cite{aittomaki2009beampattern,aubry2013ambiguity,chen2009mimo,li2012approximation,luo2010semidefinite,mariere2003blind}, automatic control
\cite{fazelnia2014convex,toker1998complexity,madani2014low,ahmadi2017dsos}, 
quantum mechanics 
\cite{hilling2010geometric,deza1994applications,laurent2015conic,burgdorf2015closure},
and cybersecurity
\cite{cid2005small,cid2004computational,courtois2002cryptanalysis,murphy2002essential}. 
The development of efficient optimization techniques and numerical algorithms for QCQP has been an active area of research for decades. 
Although QCQP is NP-hard, specialized methods that are robust and scalable are available for specific application domains.
Notable examples for which methods with guaranteed performance has been offered in the literature include problems of multi-sensor beam-forming in communication theory \cite{gershman2010convex}, phase retrieval in signal processing \cite{candes2013phaselift}, and matrix completion in machine learning \cite{mu2016scalable,candes2009exact}.

This paper advances convex relaxation framework for solving general QCQP problems  \cite{sherali1990hierarchy,nesterov1994interior,lasserre2001global,lasserre2006convergent,josz2015moment,chen2012globally,papp2013semidefinite,mohammad2017rounding}. A relaxation is said to be \textit{exact} if it has the same optimal objective value as the original problem. Convex relaxations have been critically important for tackling non-convex optimization problems and their effectiveness has been verified for numerous real-world problems \cite{lasserre2001explicit,kim2003exact,sojoudi2013exactness1,sojoudi2013exactness2,burer2018exact}. 
For challenging cases of QCQPs, convex relaxations offer effective approximation algorithms \cite{Nesterov98,Ye991,Ye992,Zhang00,Zhang06,Luo07,He08,He10}. 
For example, Geomans and Williamson \cite{goemans1995improved} show that the objective of a semidefinite programming (SDP) relaxation is within 14\% of the optimal value for the MAXCUT problem on graphs with non-negative weights.
Additionally, convex relaxations can be integrated within branch-and-bound algorithms \cite{chen2017spatial,burer2008finite} for finding near-optimal solutions to non-convex optimization problems. Despite solid theoretical guarantees, one of the primary challenges for the application of existing convex relaxations beyond small-scale instances is the rapid growth of dimensionality. In \cite{nohra2021sdp}, for a single quadratic constraint and a set of linear equations, the authors give a convex  semi-infinite program with non-convex quadratic inequalities that achieves the standard SDP relaxation bound.

General purpose convex relaxations such as SDP have a wide variety of applications in many areas including machine learning \cite{bennett2007netflix,d2004direct,kulis2007fast}, control theory \cite{andersen2010implementation,andersen2014robust,benson2006dsdp5}, and robotics \cite{mangelson2019guaranteed,thrun2005probabilistic}. Historically, the lack of scalability has been a major impediment to fulfilling the full potential of SDP for these application domains \cite{majumdar2019recent}. To address scalability challenges, a number of approaches are pursued in the literature. Several papers focus on leveraging structures such as symmetry and chordal sparsity \cite{lasserre2017bounded,weisser2018sparse}. Another direction relies on the existence of low-rank solutions for SDP \cite{AG:rank-one,Barvinok95,lemon2016low, MAD:scalable,Pataki98}. Numerical algorithms such as the alternating direction method of multipliers (ADMM) are used to tackle the scalability issues of second-order numerical methods as well \cite{yang2015sdpnal,zhao2010newton,o2016conic}. 

In this work, we offer an alternative convex relaxation for QCQP, namely {\it parabolic relaxation}, which relies solely on convex quadratic inequalities, as opposed to SDP constraints which can be computationally demanding \cite{majumdar2019recent}. An interesting property of the parabolic relaxation is that the original non-convex feasible set is contained on the boundary of the parabolic relaxation, which we refer to as the \textit{comprehensive boundary property} (CBP).
Thanks to CBP,
we prove that, under certain conditions, incorporating an appropriate linear penalty term in the objective of the parabolic relaxation can remedy inexactness and lead to recovering high-quality feasible points for non-convex QCQPs.
This penalty term can be constructed based on an point which is sufficiently close to the feasible set and satisfies a \textit{generalized linear independence constraint qualification} (GLICQ) condition. 
In place of computationally demanding relaxations such as SDP, we adopt a simple and tractable parabolic relaxation and a sequential penalty approach to find near-optimal feasible points.

In this part, we introduce the parabolic relaxation and prove that it satisfies the comprehensive boundary property for QCQP.
This is followed by a sequential penalized parabolic relaxation algorithm, and comparisons of the parapolic relaxation with other approaches, such as SDP relaxation \cite{nesterov1994interior}, SOCP relaxation \cite{alizadeh2003second}, diagonally-dominant (DD) and scaled diagonally-dominant (SDD) approximations \cite{ahmadi2017dsos} are provided. We also show that the lower bound obtained from the parabolic relaxation can be made as strong as the SDP relaxation bound through an appropriate basis change. This result allows one to replace an SDP relaxation with a cheaper-to-compute parabolic relaxation in algorithms that that require solving a sequence of convex surrogates. Lastly, we provide examples comparing the relaxations and demonstrating the sequential algorithm. 
The majority of theoretical and computational results are given in the second part of this work \cite{parabolic_part2}. 

\subsection{Notations}

For a given vector $\abf$ and a matrix $\Abf$, the symbols $a_i$ and $A_{ij}$, respectively, indicate the $i^{th}$ element of $\abf$ and the $(i,j)^{\mathrm{th}}$ element of $\Abf$.
The symbols $\Rbb$, $\Rbb^{n}$, and $\Rbb^{n\times m}$ denote the sets of real scalars, real vectors of size $n$, and real matrices of size $n\times m$, respectively. The set of $n\times n$ real symmetric matrices is shown by $\Sbb_n$. 
The notations $\Dbb_n$ and $\Sbb_n^{+}$, respectively, represent the sets of diagonally-dominant and positive semidefinite members of $\Sbb_n$. The $n\times n$ identity matrix is denoted by $\Ibf_n$. The origins of $\Rbb^n$ and $\Rbb^{n\times m}$ are denoted by $\boldsymbol{0}_n$ and $\boldsymbol{0}_{n\times m}$, respectively, while $\boldsymbol{1}_n$ denotes the all one $n$-dimensional vector. For a pair of $n\times n$ symmetric matrices $(\Abf,\Bbf)$, the notation $\Abf\succeq\Bbf$ means that $\Abf-\Bbf$ is positive semidefinite, whereas $\Abf\succ\Bbf$ means that $\Abf-\Bbf$ is positive definite. Given a matrix $\Abf\in\mathbb{R}^{m\times n}$, the notation $\sigma_{\min}(\Abf)$ represents the minimum singular value of $\Abf$. 
The symbols $\langle\cdot\,,\cdot\rangle$ and $\|\cdot\|_{\Frm}$ denote the Frobenius inner product and norm of matrices, respectively. The notations $\|\cdot\|_1$ and $\|\cdot\|_2$ denote the $1$-  and $2$-norms of vectors/matrices, respectively. The superscript $(\cdot)^\top$ and the symbol $\mathrm{tr}\{\cdot\}$ represent the transpose and trace operators, respectively. $\mathrm{vec}\{\cdot\}$ denotes the vectorization operator. The notation $|\cdot|$ represents either the absolute value operator or cardinality of a set, depending on the context. For every $\xbf\in\Rbb^n$, the notation $[\xbf]$ represents an $n\times n$ diagonal matrix with the elements of $\xbf$. The operators $\mathrm{conv}\{\cdot\}$ and $\partial$ represent convex hull and boundary of sets, respectively. The boundary of a subset $S$ of a topological space $X$ is defined as the set of points in the closure of $S$ not belonging to the interior of $S$. Given a pair of integers $(n,r)$, the binomial coefficient ``$n$ choose $r$'' is denoted by {\footnotesize $\Big(\begin{array}{c} n \\ r \end{array}\Big)$}. 

\section{Problem Formulation}

In this section, we introduce the parabolic relaxation for quadratically-constrained quadratic programming and its tensor generalization.

\subsection{Parabolic Relaxation for Quadratically-constrained Quadratic Programming}

Consider the class of general quadratically-constrained quadratic programming (QCQP)
\begin{subequations}
	\begin{align}
		& \underset{\xbf\in\Rbb^n}{\text{minimize}}
		& & q_0(\xbf)  & \label{QCQP_vector_a}\\
		& \text{subject to}
		& &   q_k(\xbf)\leq0,   &&&& k\in\Ical   \label{QCQP_vector_b}, \\
		& & & q_k(\xbf)=0,      &&&& k\in\Ecal   \label{QCQP_vector_c},
	\end{align}
\end{subequations}	
where $\Ical$ and $\Ecal$ denote the sets of inequality and equality constraints, respectively. For each $k\in\{0\}\cup\Ical\cup\Ecal$, define $q_k:\Rbb^n\to\Rbb$ as a possibly non-convex quadratic function of the form $q_k(\xbf)\triangleq\xbf\T\!\Abf_k\xbf+2\bbf\T_k\xbf+c_k\Tp$, where the matrix $\Abf_k\in\Sbb_n$, the vector $\bbf_k\in\Rbb^n$, and the scalar $c_k\in\Rbb$ are given. 

In order to formulate convex relaxations for  problem \eqref{QCQP_vector_a}--\eqref{QCQP_vector_c}, we employ the well-known {\it lifting} technique, where an auxiliary matrix variable $\Xbf\in\Sbb_n$ is introduced as a proxy for $\xbf\xbf^{\!\top}$ and problem \eqref{QCQP_vector_a}--\eqref{QCQP_vector_c} is reformulated as:
\begin{subequations}
	\begin{align}
		& \underset{\begin{subarray}{l}\phantom{\Xbf}\xbf\in\Rbb^n\\ \phantom{\xbf}\Xbf\in\Sbb_n\end{subarray}}{\text{minimize}}
		& & \bar{q}_0(\xbf, \Xbf)  & \label{QCQP_vector_lifted_a}\\
		& \text{subject to}
		& & \bar{q}_k(\xbf, \Xbf)\leq 0, &&&& k\in\Ical,   \label{QCQP_vector_lifted_b} \\
		& & & \bar{q}_k(\xbf, \Xbf)=0,  &&&& k\in\Ecal,   \label{QCQP_vector_lifted_c} \\
		& & & \Xbf=\xbf\xbf\T,  &&&&  \label{QCQP_vector_lifted_d}
	\end{align}
\end{subequations}	
where, for each $k\in\{0\}\cup\Ical\cup\Ecal$, the linear function $\bar{q}_k:\Rbb^n\times\Sbb_n\to\Rbb$ is defined as $\bar{q}_k(\xbf,\Xbf)\triangleq\mathrm{tr}\{\Abf_k\T\Xbf\}+2\bbf^{\!\top}_k\xbf+c_k$.  Define $\Ncal\triangleq\{1,\ldots,n\}$.
 The most common method to tackle the non-convex constraint \eqref{QCQP_vector_lifted_d} is to relax it to
\begin{align}
\Xbf\succeq\xbf\xbf\T,
\end{align}
which is referred to as the semidefinite programming (SDP) relaxation \cite{nesterov1994interior}. A more tractable alternative is
\begin{align} \label{common-socp}
\begin{bmatrix}
 X_{ii} & X_{ij}\\
 X_{ij} & X_{jj}
\end{bmatrix}\succeq 0\quad\forall i,j\in\Ncal,\qquad\qquad
\begin{bmatrix}
 X_{ii} & x_{i}\\
 x_{i} & 1
\end{bmatrix}\succeq 0\quad\forall i\in\Ncal,
\end{align}
which is referred to as the second-order cone programming (SOCP) relaxation \cite{alizadeh2003second}.

\pagebreak

Here we introduce the parabolic relaxation as an alternative to the SDP and SOCP relaxations described above. Let $\{\wbf_i\in\Rbb^n\}_{i\in\Ncal}$ represent an arbitrary basis for $\Rbb^n$. Observe that, the constraint \eqref{QCQP_vector_lifted_d} can be reformulated as:
\begin{align}
&w_i\T \Xbf w_j= w_i\T\xbf (w_j\T\xbf)\T, && \forall i,j\in\Ncal,
\end{align}
which is equivalent to:
\begin{subequations}
\begin{align}
& (\wbf_i+\wbf_j)^{\!\top}\Xbf(\wbf_i+\wbf_j)= 
\big((\wbf_i+\wbf_j)\T\xbf\big)^2,  && i,j\in\Ncal, \label{rel1}\\
& (\wbf_i-\wbf_j)^{\!\top}\Xbf(\wbf_i-\wbf_j)= 
\big((\wbf_i-\wbf_j)\T\xbf\big)^2,  && i,j\in\Ncal, \label{rel2}
\end{align}
\end{subequations}
By relaxing \eqref{rel1} -- \eqref{rel2} to convex inequalities, we arrive at the \textit{parabolic relaxation} of QCQP:
\begin{subequations}
	\begin{align}
		& \underset{\begin{subarray}{l}\phantom{\Xbf}\xbf\in\Rbb^n\\ \phantom{\xbf}\Xbf\in\Sbb_n\end{subarray}}{\text{minimize}}
		& & \bar{q}_0(\xbf, \Xbf)  & \label{QCQP_vector_para_a}\\
		& \text{subject to}
		& & \bar{q}_k(\xbf, \Xbf)\leq 0, &&&& k\in\Ical,   \label{QCQP_vector_para_b} \\
		& & & \bar{q}_k(\xbf, \Xbf)=0,  &&&& k\in\Ecal,   \label{QCQP_vector_para_c} \\
		& & & (\wbf_i+\wbf_j)^{\!\top}\Xbf(\wbf_i+\wbf_j)\geq 
		\big((\wbf_i+\wbf_j)\T\xbf\big)^2,  &&&& i,j\in\Ncal, \label{QCQP_vector_para_d}\\
		& & & (\wbf_i-\wbf_j)^{\!\top}\Xbf(\wbf_i-\wbf_j)\geq 
		\big((\wbf_i-\wbf_j)\T\xbf\big)^2,  &&&& i,j\in\Ncal, \label{QCQP_vector_para_e}
	\end{align}
\end{subequations}	
in which the convex quadratic constraints \eqref{QCQP_vector_para_d} -- \eqref{QCQP_vector_para_e} are proxies for the non-convex constraint $\Xbf=\xbf\xbf\T$.


The term \textit{parabolic} is motivated by the fact that  \eqref{QCQP_vector_para_d} -- \eqref{QCQP_vector_para_e} can be cast as
\begin{subequations}
\begin{align}
&\big((\wbf_i+\wbf_j)\T\xbf,\;  (\wbf_i+\wbf_j)^{\!\top}\Xbf(\wbf_i+\wbf_j)\big)\in\mathbb{P},  && i,j\in\Ncal, \\
&\big((\wbf_i-\wbf_j)\T\xbf,\;  (\wbf_i-\wbf_j)^{\!\top}\Xbf(\wbf_i-\wbf_j)\big)\in\mathbb{P},  && i,j\in\Ncal, 
\end{align}
\end{subequations}
where $\mathbb{P}\triangleq\{(\mathsf{x},\mathsf{y})\in\Rbb^2\,|\,\mathsf{y}\geq\mathsf{x}^2\}$ represents the epigraph of a two-dimensional parabola. For the standard basis for $\Rbb^n$, 
parabolic constraints \eqref{QCQP_vector_para_d} -- \eqref{QCQP_vector_para_e} reduce to: 
\begin{subequations} \label{eq:standard-parabolic}
	\begin{align}
		X_{ii}+X_{jj}+2X_{ij}\geq (x_i+x_j)^2,\quad\quad\forall i,j\in\Ncal, \label{QCQP_vector_para_stan_d}\\
		X_{ii}+X_{jj}-2X_{ij}\geq (x_i-x_j)^2,\quad\quad\forall i,j\in\Ncal. \label{QCQP_vector_para_stan_e}	
	\end{align}
\end{subequations}
Observe that the parabolic inequalities \eqref{QCQP_vector_para_d} -- \eqref{QCQP_vector_para_e} can also be written as second order cone constraints:
\begin{subequations}
\begin{align}
&(\wbf_i+\wbf_j)^{\!\top}\Xbf(\wbf_i+\wbf_j)+1\geq \nonumber\\
&\qquad\qquad\qquad\qquad\left\|\left[(\wbf_i+\wbf_j)^{\!\top}\Xbf(\wbf_i+\wbf_j)-1,\; 2(\wbf_i+\wbf_j)\T\xbf\right]\right\|_2,\\
&(\wbf_i-\wbf_j)^{\!\top}\Xbf(\wbf_i-\wbf_j)+1\geq \nonumber\\
&\qquad\qquad\qquad\qquad\left\|\left[(\wbf_i-\wbf_j)^{\!\top}\Xbf(\wbf_i-\wbf_j)-1,\; 2(\wbf_i-\wbf_j)\T\xbf\right]\right\|_2,
\end{align}
\end{subequations}
which are different from the common-practice SOCP relaxation \eqref{common-socp}.

The proposed parabolic relaxation approach is closely related, yet possesses fundamental differences with other relaxations in the literature. The remainder of the paper articulates major differences of this approach and offers thorough comparisons with  SDP \cite{nesterov1994interior}
and SOCP relaxations \cite{alizadeh2003second}, as well as the diagonally-dominant (DD) and the scaled diagonally-dominant (SDD) approximations \cite{ahmadi2017dsos}. We devote particular attention to the more general case of QCQP with respect to a tensor $\Ybf\in\Rbb^{n\times m}$ for which real-world applications will be discussed in the second part.

\subsection{Parabolic Relaxation for Tensor QCQP}

In this subsection, we discuss a more general case of \eqref{QCQP_vector_a} -- \eqref{QCQP_vector_c}, for which parabolic relaxation enjoys major benefits including a lower number of auxiliary variables in the lifted space compared to the common-practice SDP and SOCP relaxations. Consider the problem of finding a matrix $\Ybf\in\Rbb^{n\times m}$ that minimizes a quadratic objective function subject to a set of equality constraints $\Ecal$ and inequality constraints $\Ical$:
\begin{subequations}
	\begin{align}
		&\underset{\Ybf\in\Rbb^{n\times m}}{\text{minimize}}&& 
		\!\!\!q\Tp_0(\Ybf)  \label{QCQP_tensor_a}\\
		&\text{subject to} && 
		\!\!\!q\Tp_k(\Ybf) = 0, &&&&\!\!\!\!\!\!k\in\Ecal, \label{QCQP_tensor_b}\\
		& && 
		\!\!\!q\Tp_k(\Ybf) \leq 0, &&&&\!\!\!\!\!\!k\in\Ical, \label{QCQP_tensor_c}
	\end{align}
\end{subequations}
where each function $q_k:\Rbb^{n\times m}\to\Rbb$ is defined as $q\Tp_k(\Ybf)\triangleq\tr\{\Ybf\T\!\Abf\Tp_k\Ybf\}+2\,\tr\{\Bbf_k\T \Ybf\}+c\Tp_{k}$, and $\{\Abf_k\in\Sbb_{n},\;\Bbf_k\in\Rbb^{n\times m},\;c_k\in\Rbb\}_{k\in\{0\}\cup\Ecal\cup\Ical}$ are given matrices/scalars. The formulation \eqref{QCQP_tensor_a} -- \eqref{QCQP_tensor_c} is motivated by a wide-variety of applications including problems involving tensor products \cite{zohrizadeh2019non,choi2022multi}. A case study of optimization for dynamical systems is provided in the second part. 

As a major benefit, the proposed approach introduces a single variable per inner-product to relax tensor products, versus a single variable per each scalar product. Indeed, one can cast problem \eqref{QCQP_tensor_a} -- \eqref{QCQP_tensor_c} in a QCQP conical form \eqref{QCQP_vector_a} -- \eqref{QCQP_vector_c} by stacking the elements of $\Ybf$ in a vector and formulating the problem with respect to 
\begin{align}
	[(\Ybf\!\ebf_1)\T,\; (\Ybf\!\ebf_2)\T,\; \ldots,\; (\Ybf\!\ebf_m)\T]^{\top}\in\Rbb^{m\times n},
\end{align}
where $\{\ebf_i\}_{i\in\{1,\ldots,m\}}$ denote the standard basis for $\Rbb^m$. However, preserving the present matrix structure is key in forming an efficient parabolic relaxation. Again, we cast the non-convex problem in the lifted space $\Rbb^{n\times m}\times\Sbb_n$, by introducing a new auxiliary variable $\Xbf\in\Sbb_n$:
\begin{subequations}
	\begin{align}
		&\underset{\begin{subarray}{l}\Ybf\in\Rbb^{n\times m}\\\hspace{-0.2mm}\Xbf\in\Sbb_n\end{subarray}}
		{\text{minimize}}&& 
		\!\!\!\bar{q}\Tp_0(\Ybf\!,\Xbf) \label{QCQP_tensor_lift_a}\\
		&\text{subject to} && 
		\!\!\!\bar{q}\Tp_k(\Ybf\!,\Xbf) = 0, &&&&\!\!\!\!\!\!k\in\Ecal, \label{QCQP_tensor_lift_b}\\
		& && 
		\!\!\!\bar{q}\Tp_k(\Ybf\!,\Xbf) \leq 0, &&&&\!\!\!\!\!\!k\in\Ical, \label{QCQP_tensor_lift_c}\\
		& && 
		\!\!\!\Xbf=\Ybf\Ybf\T, &&&&\label{QCQP_tensor_lift_d}
	\end{align}
\end{subequations}
where $\bar{q}\Tp_k(\Ybf\!,\Xbf)\triangleq\tr\{\Abf\Tp_k\Xbf\}+2\,\tr\{\Bbf_k\T \Ybf\}+c\Tp_{k}$ for each $k\in\{0\}\cup\Ecal\cup\Ical$. Observe that for each $i,j\in\Ncal$, the $(i,j)$-element of the matrix $\Xbf$ accounts for the inner product of rows $i$ and $j$ of $\Ybf$, i.e.,
\begin{align}
	\Xbf=\Ybf\Ybf\T
	\quad \Leftrightarrow\quad 
	X_{ij}=\la\, \ebf_i\T \Ybf\, ,\, \ebf_j\T \Ybf\,\ra, \quad \forall i,j\in\Ncal.
\end{align}
Let $\Fcal^{\mathrm{lifted}}$ denote the feasible sets of
\eqref{QCQP_tensor_lift_a} -- \eqref{QCQP_tensor_lift_d}.

Now, relaxing the single non-convex constraint \eqref{QCQP_tensor_lift_d} to parabolic constraints as before leads to a parabolic relaxation of problem \eqref{QCQP_tensor_a} -- \eqref{QCQP_tensor_a}:
\begin{subequations}
	\begin{align}
		&\underset{\begin{subarray}{l}\Ybf\in\Rbb^{n\times m}\\\hspace{-0.2mm}\Xbf\in\Sbb_n\end{subarray}}
		{\text{minimize}}&& 
		\!\!\!\bar{q}\Tp_0(\Ybf\!,\Xbf)\label{QCQP_tensor_para_a}\\
		&\text{subject to} && 
		\!\!\!\bar{q}\Tp_k(\Ybf\!,\Xbf) = 0, &&&&\!\!\!\!\!\!k\in\Ecal,\label{QCQP_tensor_para_b}\\
		& && 
		\!\!\!\bar{q}\Tp_k(\Ybf\!,\Xbf) \leq 0, &&&&\!\!\!\!\!\!k\in\Ical,\label{QCQP_tensor_para_c}\\
		& && 
		\!\!\!(\wbf_i+\wbf_j)^{\!\top}\Xbf(\wbf_i+\wbf_j)\geq \|(\wbf_i+\wbf_j)\T\Ybf\|_2^2, &&&&
		\!\!\!\!\!\!(i,j)\in\Ncal,\label{QCQP_tensor_para_d}\\
		& && 
		\!\!\!(\wbf_i-\wbf_j)^{\!\top}\Xbf(\wbf_i-\wbf_j)\geq \|(\wbf_i-\wbf_j)\T\Ybf\|_2^2, &&&&
		\!\!\!\!\!\!(i,j)\in\Ncal,\label{QCQP_tensor_para_e}
	\end{align}
\end{subequations}
where again, $\{\wbf_i\}_{i\in\Ncal}$ is an arbitrary basis for $\Rbb^n$.  
Let $\Fcal_{\Pcal}$ denote the feasible set of \eqref{QCQP_tensor_para_a} -- \eqref{QCQP_tensor_para_e}. 
The dual problem for \eqref{QCQP_tensor_para_a} -- \eqref{QCQP_tensor_para_e} is:
\begin{subequations}
	\begin{align}
		&\underset{\begin{subarray}{l}
				\taubf\in\Rbb^{|\Ecal\cup\Ical|}\\
				\Tbf\in\Sbb_m
		\end{subarray}}
		{\text{maximize}}
		&&\tr\{\Tbf\}\label{QCQP_tensor_para_dual_a}\\
		&\text{subject to} 
		&&\begin{bmatrix}
			\Abf_0 & \Bbf_0 \\
			\Bbf_0\T & -\Tbf+m^{-1}c_0\Ibf_{\! m}\\
		\end{bmatrix}+
		\sum_{k\in\Ecal\cup\Ical}{\tau_k
			\begin{bmatrix}
				\Abf_k & \Bbf_k \\
				\Bbf_k\T & m^{-1}c_k\Ibf_{\! m}\\
		\end{bmatrix}}\succeq 0,\label{QCQP_tensor_para_dual_b}\\
		& &&[\wbf_1,\ldots,\wbf_n]\T\left(\Abf_0+\sum_{k\in\Ecal\cup\Ical}{\tau_k\Abf_k}\right)[\wbf_1,\ldots,\wbf_n]\in \Dbb_n,\label{QCQP_tensor_para_dual_c}\\
		& && \tau_k \geq 0, &&&&\hspace{-2cm} k\in\Ical.\label{QCQP_tensor_para_dual_d}
	\end{align}
\end{subequations}
As we will demonstrate in Section \ref{coord}, the quality of parabolic relaxation is dependent on the choice of the basis.
However, a simple, yet effective, choice for $\{\wbf_i\}_{i\in\Ncal}$ is the standard basis for $\Rbb^n$, which leads to the following convex quadratic inequalities:
\begin{subequations}
	\begin{align}
		&\!\!\!\! X_{ii}+ X_{jj} + 2 X_{ij}\geq \| (\ebf_i+\ebf_j)\T\Ybf\|^2_2,\qquad\qquad \forall i,j\in\Ncal.\label{QCQP_tensor_para_stan_d}\\		
		&\!\!\!\! X_{ii}+ X_{jj} - 2 X_{ij}\geq \| (\ebf_i-\ebf_j)\T\Ybf\|^2_2,\qquad\qquad \forall i,j\in\Ncal,\label{QCQP_tensor_para_stan_e}
	\end{align}
\end{subequations}
Section \ref{sec:related} offers thorough comparisons between this approach and related methods for handling the nonconvex constraint \eqref{QCQP_tensor_lift_d}. 

\section{Comprehensive Boundary Property}\label{cbp}

Polynomial-time solvable convex relaxations may be inexact when applied to general non-convex optimization problems, in which case, the resulting solutions may be infeasible for the original problem. In order to make a meaningful assessment of the proposed parabolic relaxation, we first introduce a desirable property, namely the {comprehensive boundary property} (CBP), which is useful when one needs to infer feasible points from inexact relaxations. The CBP ensures that every extreme point which can be an optimal solution of the non-convex problem 
is included on the boundary of the relaxed feasible set, which makes it attainable for an alternative objective function. This notion is formally defined next.
\begin{definition}[Comprehensive Boundary Property] Let $\Fcal^{\mathrm{relax}}\subseteq\Rbb^{n\times m}\times\Sbb_n$ be a closed, convex relaxation of $\Fcal^{\mathrm{lifted}}$. $\Fcal^{\mathrm{relax}}$ is said to satisfy the \textit{comprehensive boundary property} for $\Fcal^{\mathrm{lifted}}$ if $\Fcal^{\mathrm{lifted}}\subseteq\partial\Fcal^{\mathrm{relax}}$.
\label{cbp}\end{definition}

Definition \ref{cbp} is illustrated in Figure \ref{fig_CBP}.A. As shown in Figure \ref{fig_CBP}.B, CBP allows us to round an infeasible point $\Ych\in\Rbb^{n\times m}\setminus\Fcal$ into a member of $\Fcal$ by incorporating an appropriate linear penalty term into the objective. 

Moreover, as we will demonstrate later, CBP allows us to search for solutions by solving a sequence of convex problems in a manner that membership to $\Fcal$ is preserved and the objective is monotonically improved. The success of the sequential framework and penalization in solving bilinear matrix inequalities (BMIs) demonstrated in \cite{ibaraki2001rank,BMI1,BMI2}. The incorporation of penalty terms into the objective of SDP relaxations are proven to be computationally effective for solving non-convex optimization problems in power systems \cite{madani2015convex,madani2016promises,OPF_CDC,UC_CDC}. These papers show that penalizing certain physical quantities in power network optimization problems such as reactive power loss or thermal loss facilitates the recovery of feasible points from convex relaxations. In \cite{ibaraki2001rank}, a sequential framework is introduced for solving BMIs without theoretical guarantees. This approach is investigated further and theoretical results for the penalized SDP are derived \cite{BMI1,BMI2,madanipolynomial}.  

Below we show that while the parabolic relaxation of $\Fcal^{\mathrm{lifted}}$ enjoys CBP, 
the SOCP relaxation does not.

\begin{theorem}\label{thm_CBP}
$\Fcal_{\Pcal}$ satisfies the comprehensive boundary property for $\Fcal^{\mathrm{lifted}}$.
\end{theorem}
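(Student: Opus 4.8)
The plan is to unpack the definition: since $\Fcal_{\Pcal}$ is the intersection of the affine constraints \eqref{QCQP_tensor_para_b}--\eqref{QCQP_tensor_para_c} with the convex quadratic constraints \eqref{QCQP_tensor_para_d}--\eqref{QCQP_tensor_para_e}, it is closed and convex, so the comprehensive boundary property amounts to showing $\Fcal^{\mathrm{lifted}}\subseteq\partial\Fcal_{\Pcal}$. I would split this into (i) every lifted-feasible point lies in $\Fcal_{\Pcal}$, hence in $\overline{\Fcal_{\Pcal}}=\Fcal_{\Pcal}$, and (ii) no such point lies in $\mathrm{int}\,\Fcal_{\Pcal}$; together these give membership in $\partial\Fcal_{\Pcal}=\overline{\Fcal_{\Pcal}}\setminus\mathrm{int}\,\Fcal_{\Pcal}$.

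For step (i), take any $(\Ybf,\Xbf)\in\Fcal^{\mathrm{lifted}}$. By \eqref{QCQP_tensor_lift_d} we have $\Xbf=\Ybf\Ybf\T$, so for every $\wbf\in\Rbb^n$ it holds that $\wbf\T\Xbf\wbf=\wbf\T\Ybf\Ybf\T\wbf=\|\wbf\T\Ybf\|_2^2$. Applying this with $\wbf=\wbf_i\pm\wbf_j$ shows that the parabolic inequalities \eqref{QCQP_tensor_para_d}--\eqref{QCQP_tensor_para_e} are satisfied \emph{with equality}, while the remaining constraints \eqref{QCQP_tensor_para_b}--\eqref{QCQP_tensor_para_c} coincide with \eqref{QCQP_tensor_lift_b}--\eqref{QCQP_tensor_lift_c}. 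Hence $(\Ybf,\Xbf)\in\Fcal_{\Pcal}$, confirming that $\Fcal_{\Pcal}$ is a relaxation and placing every lifted point in $\overline{\Fcal_{\Pcal}}$.

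The core is step (ii), and the driving observation is that since all parabolic constraints are active at $(\Ybf,\Xbf)$, an arbitrarily small perturbation that strictly lowers the left-hand side of one of them leaves $\Fcal_{\Pcal}$. Concretely, fix any $i,j\in\Ncal$ with $\wbf_i+\wbf_j\neq\boldsymbol{0}_n$ (such a pair exists, e.g.\ $i=j$, as the basis vectors are nonzero), set $\ubf\triangleq(\wbf_i+\wbf_j)/\|\wbf_i+\wbf_j\|_2$, and for $\delta>0$ define $\Xbf_\delta\triangleq\Xbf-\delta\,\ubf\ubf\T\in\Sbb_n$. Using $(\wbf_i+\wbf_j)\T\ubf=\|\wbf_i+\wbf_j\|_2$ gives $(\wbf_i+\wbf_j)\T\Xbf_\delta(\wbf_i+\wbf_j)=\|(\wbf_i+\wbf_j)\T\Ybf\|_2^2-\delta\|\wbf_i+\wbf_j\|_2^2<\|(\wbf_i+\wbf_j)\T\Ybf\|_2^2$, so $(\Ybf,\Xbf_\delta)$ violates \eqref{QCQP_tensor_para_d} and lies outside $\Fcal_{\Pcal}$. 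Since $\|\Xbf_\delta-\Xbf\|_{\Frm}=\delta\to0$, every neighborhood of $(\Ybf,\Xbf)$ meets the complement of $\Fcal_{\Pcal}$, so $(\Ybf,\Xbf)\notin\mathrm{int}\,\Fcal_{\Pcal}$. Combining (i) and (ii) yields $\Fcal^{\mathrm{lifted}}\subseteq\partial\Fcal_{\Pcal}$.

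The one point needing care — and what at first resembles an obstacle — is the instinct to keep the perturbed point feasible for the affine constraints \eqref{QCQP_tensor_para_b}--\eqref{QCQP_tensor_para_c} while moving $\Xbf$. This is in fact unnecessary: establishing boundary membership requires only \emph{nearby points outside} $\Fcal_{\Pcal}$, and violating a single active parabolic inequality already certifies this, so the perturbation direction $\ubf\ubf\T$ can be chosen freely without regard to the other constraints.
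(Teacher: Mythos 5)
Your proof is correct, and it takes a genuinely different route from the paper's. You verify the topological definition of the boundary directly: every lifted point lies in $\Fcal_{\Pcal}$ with all parabolic constraints active, and the rank-one perturbation $\Xbf-\delta\,\ubf\ubf\T$ (with $\ubf$ the normalized $\wbf_i+\wbf_j$) strictly violates one of those active constraints while sitting at Frobenius distance $\delta$, so the point cannot be interior; your closing remark is also right that the perturbed point need not respect the affine constraints \eqref{QCQP_tensor_para_b}--\eqref{QCQP_tensor_para_c}, since boundary membership only requires nearby points \emph{outside} the set. The paper argues differently: it fixes the non-constant linear objective $\tr\{\Xbf-2\Ych\Ybf\T+\Ych\Ych\T\}$, forms the Lagrangian dual of minimizing it over $\Fcal_{\Pcal}$, exhibits the strictly feasible dual point $\big(\varepsilon\boldsymbol{1}_{|\Ecal\cup\Ical|},-\varepsilon^{-1}\Ibf_m\big)$ to get strong duality, and then checks that $(\Ych,\Ych\Ych\T)$ together with $\big(\boldsymbol{0}_{|\Ecal\cup\Ical|},-\Ych\T\Ych+m^{-1}\tr\{\Ych\T\Ych\}\Ibf_m\big)$ is a primal--dual optimal pair; boundary membership then follows because a non-constant affine function cannot attain its minimum at an interior point of a convex set. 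Your argument is more elementary and self-contained --- no duality, no strict feasibility certificate --- and it is manifestly valid for an arbitrary basis $\{\wbf_i\}_{i\in\Ncal}$. What the paper's heavier construction buys is a byproduct your perturbation does not provide: the minimizing functional is exactly the penalty term of the penalized relaxation \eqref{prob_relax_pen_1a}, so the same computation shows that penalization anchored at a feasible $\Ych$ admits $(\Ych,\Ych\Ych\T)$ as an optimal solution, with explicit dual certificates in hand --- the mechanism underlying Algorithm \ref{alg:1} and the recovery theorems in the second part.
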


\begin{proof} 
Recall that $\Fcal^{\mathrm{lifted}}$ and $\Fcal_{\Pcal}$ denote the feasible sets of
\eqref{QCQP_tensor_lift_a} -- \eqref{QCQP_tensor_lift_d} and
\eqref{QCQP_tensor_para_a} -- \eqref{QCQP_tensor_para_e}, respectively.
Consider an arbitrary $(\Ych,\Ych\Ych\T)\in\Fcal^{\mathrm{lifted}}$. In order to prove $(\Ych,\Ych\Ych\T)\in\partial\Fcal_{\Pcal}$, it suffices to construct a linear objective function with $(\Ych,\Ych\Ych\T)$ as the optimal solution. The following objective serves this purpose: 
	\begin{subequations}
		\begin{align}
			&\underset{\begin{subarray}{l}\Ybf\in\Rbb^{n\times m}\\\hspace{-0.2mm}\Xbf\in\Sbb_n\end{subarray}}
			{\text{minimize}}&& 
			\!\!\!\tr\{\Xbf-2\Ych\Ybf\T+\Ych\Ych\T\}\label{QCQP_tensor_para_aux_a}\\
			&\text{subject to} && 
			\!\!\!(\Ybf,\Xbf)\in\Fcal_{\Rcal}, &&&&\label{QCQP_tensor_para_aux_b}
		\end{align}
	\end{subequations}
	where $\Fcal_{\Rcal}$ denotes the set of pairs $(\Ybf,\Xbf)\in\Rbb^{n\times m}\times\Sbb_n$ that satisfy \eqref{QCQP_tensor_para_b} -- \eqref{QCQP_tensor_para_e}.
	The corresponding dual problem is:
	\begin{subequations}
		\begin{align}
			&\underset{\begin{subarray}{l}
					\taubf\in\Rbb^{|\Ecal\cup\Ical|}\\
					\Tbf\in\Sbb_m
			\end{subarray}}
			{\text{maximize}}
			&&\tr\{\Tbf\}\label{QCQP_tensor_para_dual_aux_a}\\
			&\text{subject to} 
			&&\begin{bmatrix}
				\Ibf_n & \Ych \\
				\Ych\T & -\Tbf+m^{-1}\tr\{\Ych\T\Ych\}\Ibf_{\!m}\\
			\end{bmatrix}+
			\sum_{k\in\Ecal\cup\Ical}{\tau_k
				\begin{bmatrix}
					\Abf_k & \Bbf_k \\
					\Bbf_k\T & m^{-1}c_k\Ibf_{\! m}\\
			\end{bmatrix}}\succeq 0,\label{QCQP_tensor_para_dual_aux_b}\\
			& &&[\wbf_1,\ldots,\wbf_n]\T\left(\Ibf_n+\sum_{k\in\Ecal\cup\Ical}{\tau_k\Abf_k}\right)[\wbf_1,\ldots,\wbf_n]\in \Dbb_n,\label{QCQP_tensor_para_dual_aux_c}\\
			& && \tau_k \geq 0, &&&&\hspace{-2cm} k\in\Ical.\label{QCQP_tensor_para_dual_aux_d}
		\end{align}
	\end{subequations}
	If $\varepsilon>0$ is sufficiently small, then  $(\taubf,\Tbf)=\big(\varepsilon\,\boldsymbol{1}_{|\Ecal\cup\Ical|},\;-\varepsilon^{-1}\,\boldsymbol{I}_{m}\big)$ is strictly feasible for the dual problem which proves strong duality.
	Now, it is straightforward to verify that 
	$\big(\Ych,\Ych\Ych\T\big)$ and $\big(\boldsymbol{0}_{|\Ecal\cup\Ical|},-\Ych\T\Ych+m^{-1}\tr\{\Ych\T\Ych\}\Ibf_{\!m}\big)$ are a pair of primal and dual optimal solutions, which concludes the proof.
	\qed\end{proof}

 \begin{figure*}[t]
	\centering
	\includegraphics[height=0.28\textwidth]{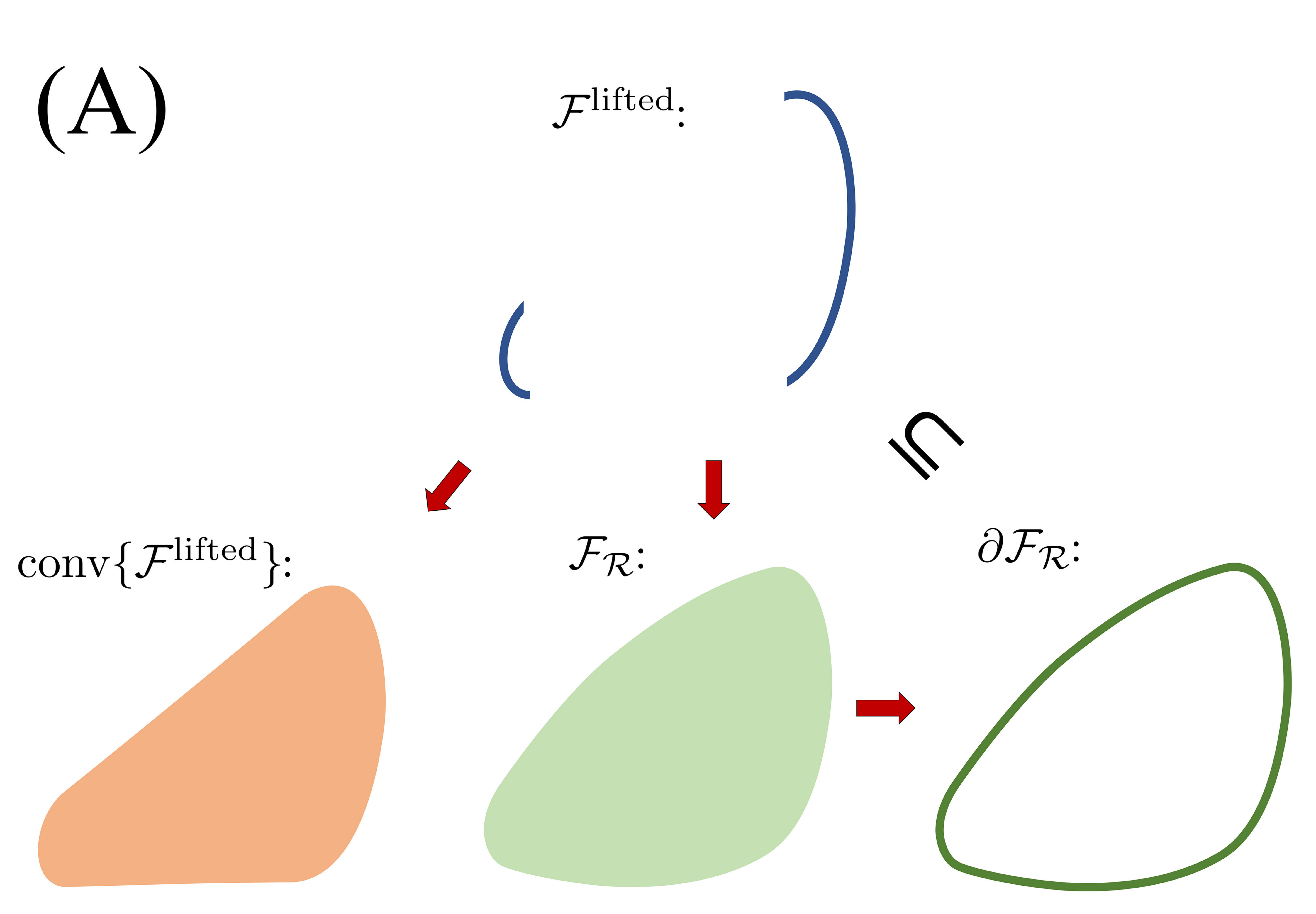}\hspace{1cm}
	\includegraphics[height=0.28\textwidth]{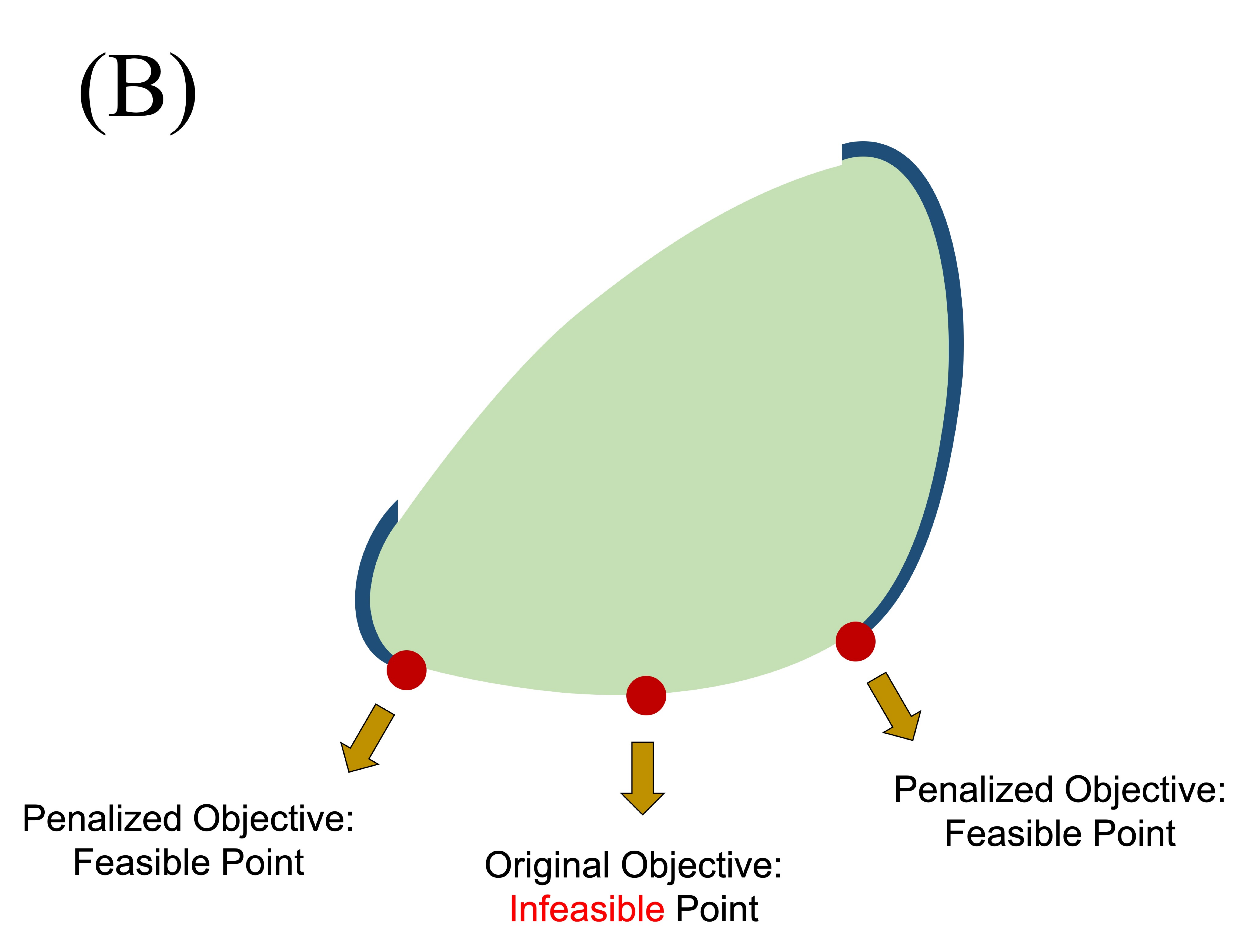}
	\caption{
		(A) Comprehensive boundary property for convex set $\Rcal$, 
		(B) Penalization.
	}
	\label{fig_CBP}
\end{figure*}

In the following example, we show that the SOCP relaxation does not necessarily satisfy CBP, contrasting it with the parabolic relaxation.

\begin{example}
Consider a two-dimensional feasible set $\Fcal=\{(x_1,x_2)\in\Rbb^2\,|\,x_2+(x_1-x_2)^2=0\}$. The resulting feasible sets for the parabolic relaxation from the standard basis \eqref{eq:standard-parabolic}
and the SOCP relaxation \eqref{common-socp} are given as:
\begin{subequations}
\begin{align}
\mathrm{proj}_{\Rbb^2}\{\Fcal_{\Pcal}\}=&\Big\{(x_1,x_2)\in\Rbb^2\,|\,\exists(X_{11},X_{22},X_{12})\in\Rbb^3: \nonumber\\
&x_2+X_{11}+X_{22}-2X_{12}=0,\,
X_{11}+X_{22}+2X_{12}\geq(x_1+x_2)^2,\nonumber\\
&X_{11}+X_{22}-2X_{12}\geq(x_1-x_2)^2,
\,X_{11}\geq x_1^2,
\,X_{22}\geq x_2^2\Big\}  \nonumber \\
= &\big \{ (x_1,x_2)\in\Rbb^2: x_2+(x_1-x_2)^2 \le 0 \big \}, \nonumber \\
\mathrm{proj}_{\Rbb^2}\{\Fcal_{\mathrm{SOCP}}\}= & \Big\{(x_1,x_2)\in\Rbb^2\,|\,\exists(X_{11},X_{22},X_{12})\in\Rbb^3: \nonumber\\
&x_2+X_{11}+X_{22}-2X_{12}=0,\,
X_{11}X_{22}\geq X_{12}^2,\nonumber\\
&X_{11}\geq x_1^2,
\,X_{22}\geq x_2^2\Big\} \nonumber \\ 
= & \big \{  (x_1,x_2)\in\Rbb^2: x_2 \le 0 \big \}, \nonumber
\end{align}
\end{subequations}
as demonstrated in Figure \ref{fig_CBP_SOCP}, where for every $\Gcal \subseteq \Rbb^n$, we define 
$\mathrm{proj}_{\Rbb^2}\{\Gcal\}\triangleq\{(x_1,x_2)\in\Rbb^2\,|\,\xbf\in\Gcal\}$.
While parabolic relaxation enjoys CBP, the SOCP relaxation does not enjoy this property.

\begin{figure*}[t]
	\centering
	\includegraphics[height=0.45\textwidth]{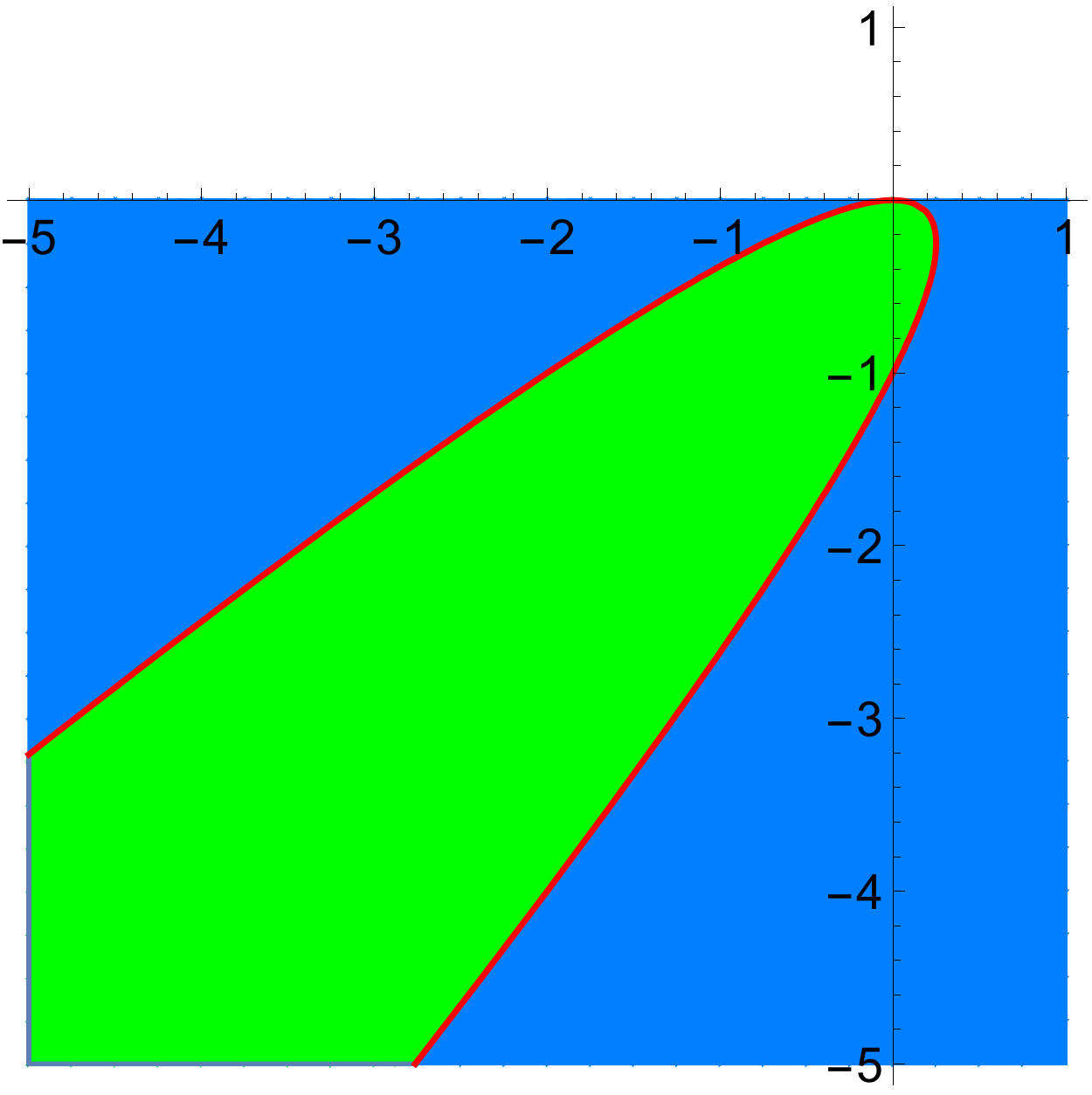}
	\caption{
The absence of CBP for the SOCP relaxation. The red curve demonstrates the feasible set $\Fcal=\{(x_1,x_2)\in\Rbb^2\,|\,x_2+(x_1-x_2)^2=0\}$, while the green and blue regions show the resulting parabolic and SOCP relaxations, respectively.
	}
	\label{fig_CBP_SOCP}
\end{figure*}

This example also illustrates that the SOCP relaxation can be much weaker than the parabolic relaxation
from the standard basis. The computational studies in part two of this work \cite{parabolic_part2} show that, indeed, for most of the instances, the parabolic relaxation from the standard basis is far superior compared to the SOCP relaxation; however, it does not necessarily dominate the SOCP relaxation in general. 
\end{example}


\section{Sequential Penalized Parabolic Relaxation}
In order to recover feasible points for cases where parabolic relaxation is not exact, one can incorporate an additional linear term into the objective function to arrive at a penalized convex problem. Given $\Ych\in\Rbb^{n\times m}$ as an initial point and a constant $\eta>0$, the penalized convex relaxation is defined as
\begin{subequations}
	\begin{align}
		&\underset{\begin{subarray}{l}\Ybf\in\Rbb^{n\times m}\\\hspace{-0.2mm}\Xbf\in\Sbb_n\end{subarray}}
		{\text{minimize}}&& 
		\!\!\!\bar{q}\Tp_0(\Ybf\!,\Xbf) +\eta\times \tr\{\Xbf-2\Ych\Ybf\T+\Ych\Ych\T\}\label{prob_relax_pen_1a}\\
		&\text{subject to} && 
		\!\!\!\bar{q}\Tp_k(\Ybf\!,\Xbf) = 0,&&&&\!\!\!\!\!\!k\in\Ecal,\label{prob_relax_pen_1b}\\
		& && 
		\!\!\!\bar{q}\Tp_k(\Ybf\!,\Xbf) \leq 0,&&&&\!\!\!\!\!\!k\in\Ical,\label{prob_relax_pen_1c}\\
		& && 
		\!\!\!X_{ii}+ X_{jj}-2 X_{ij}\geq \| (\ebf_i-\ebf_j)\T\Ybf\|^2_2,&&&&
		\!\!\!\!\!\! i,j\in\Ncal,\label{prob_relax_pen_1d}\\
		& && 
		\!\!\!X_{ii}+ X_{jj}+2 X_{ij}\geq \|(\ebf_i+\ebf_j)\T\Ybf\|^2_2.&&&&
		\!\!\!\!\!\! i,j\in\Ncal,\label{prob_relax_pen_1e}
	\end{align}
\end{subequations}
The following two theorems guarantee that an appropriate choice of $\Ych$ and $\eta$ results in a feasible point for the non-convex problem \eqref{QCQP_tensor_a} -- \eqref{QCQP_tensor_c}. In the second part, we prove that the convex problem \eqref{prob_relax_pen_1a} -- \eqref{prob_relax_pen_1e} has a unique solution $(\Yst,\Xst)$ that satisfies $\Xst=\Yst\Yst\T$ for appropriate choices of $\eta$ and under either of the following conditions:
\begin{itemize}
	\item[i)] If $\Ych$ is a feasible point for \eqref{QCQP_tensor_a} -- \eqref{QCQP_tensor_c} that satisfies linear independence constraint qualification (LICQ), or
	\item[ii)] If $\Ych\in\Rbb^{n\times m}$ is close to the feasible set of \eqref{QCQP_tensor_a} -- \eqref{QCQP_tensor_c} and satisfies a generalized LICQ (GLICQ) property (\cite[Definition 3]{parabolic_part2}). 
\end{itemize}
An accurate statement of these results, along with explicit bounds for $\eta$ are given by \cite[Theorems 1 and 2]{parabolic_part2}. In light of these theorems, we adopt a sequential framework to obtain feasible near optimal points for  problem \eqref{QCQP_tensor_a} -- \eqref{QCQP_tensor_c}. This procedure is detailed in Algorithm \ref{alg:1}, for which \cite[Theorem 3]{parabolic_part2} offers a proof of convergence.

\begin{algorithm}[t]
	\caption{Sequential Penalized Parabolic Relaxation.}\label{alg:1}
	\begin{algorithmic}[1]
		\Require {$\Ybf^{(0)}\in\Rbb^{n\times m}$, $\eta>0$ and $\ell:=0$}
		\Repeat
		\State $\ell:=\ell+1$
		\State solve the penalized convex problem \eqref{prob_relax_pen_1a} -- \eqref{prob_relax_pen_1e} with $\Ych=\Ybf^{(\ell-1)}$ to obtain $\big(\Yst,\Xst\big)$
		\State $\Ybf^{(\ell)}:=\Yst$
		\Until {stopping criterion is met.}
		\Ensure {$\Ybf^{(\ell)}$}
	\end{algorithmic}\label{al:alg_1}
\end{algorithm}

\section{SDP Strength via Basis Update}\label{coord}
In this section, we show that an appropriate choice of basis can result in a parabolic relaxation that is as strong as SDP equipped with additional valid inequalities in the direction of desired objective. To this end, consider the following SDP relaxation of problem \eqref{QCQP_tensor_a} -- \eqref{QCQP_tensor_c}:
\begin{subequations}
	\begin{align}
		&\underset{\begin{subarray}{l}\Ybf\in\Rbb^{n\times m}\\\hspace{-0.2mm}\Xbf\in\Sbb_n\end{subarray}}
		{\text{minimize}}&& 
		\!\!\!\phantom{\Lambdabf}\phantom{\tau_k}\;\;\;\;\bar{q}\Tp_0(\Ybf\!,\Xbf) \label{prob_sdp_rlt_1a}\\
		&\text{subject to} && 
		\!\!\!\phantom{\Lambdabf}\tau_k:\;\bar{q}\Tp_k(\Ybf\!,\Xbf) = 0,&&&&\!\!\!\!\!\!k\in\Ecal,\label{prob_sdp_rlt_1b}\\
		& && 
		\!\!\!\phantom{\Lambdabf}\tau_k:\;\bar{q}\Tp_k(\Ybf\!,\Xbf) \leq 0,&&&&\!\!\!\!\!\!k\in\Ical,\label{prob_sdp_rlt_1c}\\
		& && 
		\!\!\!\phantom{\Lambdabf}\nu_k:\;\bar{r}\Tp_k(\Ybf\!,\Xbf) \leq 0,&&&&\!\!\!\!\!\!k\in\Vcal,\label{prob_sdp_rlt_1d}\\
		& && 
		\!\!\!\phantom{\tau_k\Lambdabf:\;}\;\Xbf\succeq\Ybf\Ybf\T,&&&&\label{prob_sdp_rlt_1e}
	\end{align}
\end{subequations}
where $\taubf\in\Rbb^{|\Ecal\cup\Ical|}$ and $\nubf\in\Rbb^{|\Vcal|}$ are Lagrange multipliers, and 
$\bar{r}\Tp_k(\Ybf\!,\Xbf)\triangleq\tr\{\Fbf\Tp_k\Xbf\}+2\,\tr\{\Gbf_k\T \Ybf\}+h\Tp_{k}$ for each $k\in\Vcal$, where $\{\Fbf_k\in\Sbb_{n},\;\Gbf_k\in\Rbb^{n\times m},\;h_k\in\Rbb\}_{k\in\Vcal}$ are given matrices/scalars.
Constraint \eqref{prob_sdp_rlt_1d} may include any arbitrary collection of affine valid inequalities such as the reformulation-linearization (RLT) inequalities. The corresponding dual SDP relaxation is formulated as:
\begin{subequations}
	\begin{align}
		&\underset{\begin{subarray}{l}\taubf\in\Rbb^{|\Ecal\cup\Ical|}\\
				\nubf\in\Rbb^{|\Vcal|}\\
				\!\Tbf\in\Sbb_m\end{subarray}}
		{\text{maximize}}
		&&\tr\{\Tbf\}\label{prob_sdp_rlt_dual_1a}\\
		&\text{subject to} 
		&&\begin{bmatrix}
			\Abf_0 & \Bbf_0 \\
			\Bbf_0\T & -\Tbf+m^{-1}c_0\Ibf_{\! m}\\
		\end{bmatrix}+
		\sum_{k\in\Ecal\cup\Ical}{\tau_k
			\begin{bmatrix}
				\Abf_k & \Bbf_k \\
				\Bbf_k\T & m^{-1}c_k\Ibf_{\! m}\\
		\end{bmatrix}}+ \nonumber\\
		& && \ \ \ \ \ \ \ 	\sum_{k\in\Vcal}{\nu_k
			\begin{bmatrix}
				\!\Fbf_{\!k} & \Gbf_{k} \\
				\Gbf_{k}\T & m^{-1}h_k\Ibf_{\! m}\\
		\end{bmatrix}}\succeq 0,\label{prob_sdp_rlt_dual_1b}\\
	   & && \tau_k \geq 0, \ \ k\in\Ical, 
		\ \ \ \   \nu_k \geq 0, \ \ k\in\Vcal.\label{prob_sdp_rlt_dual_1d}
	\end{align}
\end{subequations}
Next, we show that any dual feasible point $(\hat{\taubf},\hat{\nubf},\hat{\Tbf})\in\Rbb^{|\Ecal\cup\Ical|}\!\times\!\Rbb^{|\Vcal|}\!\times\!\Sbb_m$ for \eqref{prob_sdp_rlt_dual_1a} -- \eqref{prob_sdp_rlt_dual_1d} can be used to construct a
basis $\{\wht_i\}_{i\in\Ncal}$ for $\Rbb^n$, resulting in a parabolic relaxation of the form:
\begin{subequations}
	\begin{align}
		&\underset{\begin{subarray}{l}\Ybf\in\Rbb^{n\times m}\\\hspace{-0.2mm}\Xbf\in\Sbb_n\end{subarray}}
		{\text{minimize}}&& 
		\!\!\!\bar{q}\Tp_0(\Ybf\!,\Xbf)\label{prob_cor_1a}\\
		&\text{subject to} && 
		\!\!\!\bar{q}\Tp_k(\Ybf\!,\Xbf) = 0,&&&&\!\!\!\!\!\!k\in\Ecal,\label{prob_cor_1b}\\
		& && 
		\!\!\!\bar{q}\Tp_k(\Ybf\!,\Xbf) \leq 0,&&&&\!\!\!\!\!\!k\in\Ical,\label{prob_cor_1c}\\
		& && 
		\!\!\!\!\sum_{k\in\Vcal}{\nuht_k\Tp\bar{r}\Tp_k(\Ybf\!,\Xbf)} \leq 0,&&&&\label{prob_cor_1d}\\
		& && 
		\!\!\!  (\wht_i-\wht_j)\T\Xbf(\wht_i-\wht_j) \geq \| (\wht_i-\wht_j)\T \Ybf\|^2_2,&&&&
		\!\!\!\!\!\! i,j\in\Ncal,\label{prob_cor_1e}\\
		& && 
		\!\!\!  (\wht_i+\wht_j)\T\Xbf(\wht_i+\wht_j) \geq \| (\wht_i+\wht_j)\T \Ybf\|^2_2,&&&&
		\!\!\!\!\!\! i,j\in\Ncal,\label{prob_cor_1f}
	\end{align}
\end{subequations}
whose lower-bound is not less than $\tr\{\hat{\Tbf}\}$. This approach is useful for cases where one needs to solve a sequence of convex surrogates given by Algorithm \ref{alg:1} or a branch-and-bound search.

\begin{theorem}\label{thm_cord}
	Let $(\hat{\taubf},\hat{\nubf},\hat{\Tbf})\in\Rbb^{|\Ecal\cup\Ical|}\!\times\!\Rbb^{|\Vcal|}\!\times\!\Sbb_m$ be an arbitrary feasible point for the SDP \eqref{prob_sdp_rlt_dual_1a} -- \eqref{prob_sdp_rlt_dual_1d} and let $\{\wht_i\}_{i\in\Ncal}$ be a basis for $\Rbb^n$ such that:
	\begin{align}
		[\wht_1,\wht_2,\ldots,\wht_n]\T\left(\Abf_0\!+\!\!\!\!\sum_{k\in\Ecal\cup\Ical}{\hat{\tau}_k\Abf_k}\!+\!\sum_{k\in\Vcal}{\hat{\nu}_k\Fbf_{\!k}}\right)
		[\wht_1,\wht_2,\ldots,\wht_n]\in\Dbb_n.\!\!\! \label{DDcond}
	\end{align}
	Every feasible point $(\Ybf\!,\Xbf)\in\Rbb^{n\times m}\times\Sbb_{n}$ of the parabolic relaxation \eqref{prob_cor_1a} -- \eqref{prob_cor_1f} satisfies:
	\begin{align}
		\bar{q}_0(\Ybf\!,\Xbf)\geq\tr\{\hat{\Tbf}\}. \label{dbound}
	\end{align}
\end{theorem}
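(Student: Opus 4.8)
The plan is to establish the bound \eqref{dbound} by weak duality: I would exhibit a single feasible point of the Lagrangian dual of \eqref{prob_cor_1a}--\eqref{prob_cor_1f} whose objective equals $\tr\{\hat{\Tbf}\}$. The relaxation \eqref{prob_cor_1a}--\eqref{prob_cor_1f} is a convex program, namely the parabolic relaxation in the basis $\{\wht_i\}_{i\in\Ncal}$ together with the single aggregated affine inequality \eqref{prob_cor_1d}. Hence its Lagrangian dual furnishes a valid lower bound on $\bar{q}_0(\Ybf\!,\Xbf)$ for \emph{every} primal-feasible $(\Ybf\!,\Xbf)$, with no constraint qualification required, so it suffices to produce one dual-feasible point attaining $\tr\{\hat{\Tbf}\}$.

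First I would write down this dual. It coincides with the parabolic dual \eqref{QCQP_tensor_para_dual_a}--\eqref{QCQP_tensor_para_dual_d} (with $\{\wht_i\}$ in place of $\{\wbf_i\}$), augmented by one nonnegative scalar multiplier $s$ attached to \eqref{prob_cor_1d}. Since \eqref{prob_cor_1d} reads $\sum_{k\in\Vcal}\hat{\nu}_k\bar{r}_k(\Ybf\!,\Xbf)\le 0$ with $\bar{r}_k(\Ybf\!,\Xbf)=\tr\{\Fbf_k\Xbf\}+2\tr\{\Gbf_k\T\Ybf\}+h_k$, this multiplier injects the term $s\sum_{k\in\Vcal}\hat{\nu}_k\left[\begin{smallmatrix}\Fbf_k & \Gbf_k\\ \Gbf_k\T & m^{-1}h_k\Ibf_m\end{smallmatrix}\right]$ into the positive-semidefinite block \eqref{QCQP_tensor_para_dual_b} and the term $s\sum_{k\in\Vcal}\hat{\nu}_k\Fbf_k$ into the matrix constrained to be diagonally dominant in \eqref{QCQP_tensor_para_dual_c}.

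Then I would test the candidate $(\taubf,s,\Tbf)=(\hat{\taubf},\,1,\,\hat{\Tbf})$. With $s=1$ the semidefinite block coincides with the left-hand side of the SDP dual constraint \eqref{prob_sdp_rlt_dual_1b} evaluated at $(\hat{\taubf},\hat{\nubf},\hat{\Tbf})$, hence is positive semidefinite because $(\hat{\taubf},\hat{\nubf},\hat{\Tbf})$ is feasible for \eqref{prob_sdp_rlt_dual_1a}--\eqref{prob_sdp_rlt_dual_1d}; the diagonal-dominance constraint \eqref{QCQP_tensor_para_dual_c} becomes precisely the hypothesis \eqref{DDcond}; and the sign constraints hold, since $\hat{\tau}_k\ge 0$ for $k\in\Ical$ by \eqref{prob_sdp_rlt_dual_1d} and $s=1\ge 0$. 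Thus the candidate is dual-feasible with objective $\tr\{\hat{\Tbf}\}$, and weak duality yields $\bar{q}_0(\Ybf\!,\Xbf)\ge\tr\{\hat{\Tbf}\}$ for all feasible $(\Ybf\!,\Xbf)$, which is \eqref{dbound}.

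The crux, and the step I would verify most carefully, is the exact alignment produced by setting $s=1$ on the single aggregated inequality \eqref{prob_cor_1d}: the $\hat{\nu}_k$-weighted blocks it contributes must simultaneously complete the semidefinite block into the full SDP dual matrix of \eqref{prob_sdp_rlt_dual_1b} and realize precisely the matrix appearing in \eqref{DDcond}. This is exactly why \eqref{DDcond} is the right hypothesis: it is the diagonal-dominance (rather than full positive-semidefinite) certificate that the parabolic dual \eqref{QCQP_tensor_para_dual_c} demands of that matrix, and pre-baking the multipliers $\hat{\nu}_k$ into \eqref{prob_cor_1d} lets a single scalar $s$ carry them. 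Equivalently, one may argue directly: aggregate \eqref{prob_cor_1b}--\eqref{prob_cor_1d} with weights $\hat{\tau}_k$ and $1$ to obtain $\bar{q}_0(\Ybf\!,\Xbf)\ge\tr\{\Mbf\Xbf\}+2\tr\{\Nbf\T\Ybf\}+c$, where $\Mbf=\Abf_0+\sum_{k\in\Ecal\cup\Ical}\hat{\tau}_k\Abf_k+\sum_{k\in\Vcal}\hat{\nu}_k\Fbf_k$ and $(\Nbf,c)$ are the analogous aggregates; then use the decomposition of $\Mbf$ into the generators $(\wht_i\pm\wht_j)(\wht_i\pm\wht_j)\T$ supplied by the diagonal-dominance hypothesis together with the parabolic constraints \eqref{prob_cor_1e}--\eqref{prob_cor_1f} to replace $\tr\{\Mbf\Xbf\}$ by $\tr\{\Ybf\T\Mbf\Ybf\}$, and finally apply \eqref{prob_sdp_rlt_dual_1b} to the positive-semidefinite matrix $\left[\begin{smallmatrix}\Ybf\Ybf\T & \Ybf\\ \Ybf\T & \Ibf_m\end{smallmatrix}\right]$ to reach $\tr\{\hat{\Tbf}\}$.
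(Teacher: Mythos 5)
Your proposal is correct and follows essentially the same route as the paper: the paper likewise forms the Lagrangian dual of \eqref{prob_cor_1a}--\eqref{prob_cor_1f} with a single nonnegative multiplier (called $\kappa$ there, your $s$) on the aggregated inequality \eqref{prob_cor_1d}, verifies that $(\hat{\taubf},1,\hat{\Tbf})$ is dual feasible --- the semidefinite block reducing to \eqref{prob_sdp_rlt_dual_1b} and the diagonal-dominance condition to \eqref{DDcond} --- and concludes by weak duality. Your closing ``direct aggregation'' sketch is an unnecessary but harmless restatement of the same certificate.
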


\begin{proof}
To prove this theorem, we formulate the dual of parabolic relaxation \ref{prob_cor_1a} -- \ref{prob_cor_1f}:
	\begin{subequations}
		\begin{align}
			&\underset{\begin{subarray}{l}\taubf\in\Rbb^{|\Ecal\cup\Ical|}\\
					\kappa\in\Rbb\\
					\!\Tbf\in\Sbb_m\end{subarray}}
			{\text{maximize}}
			&&\tr\{\Tbf\}\label{par_dual_a}\\
			&\text{subject to} 
			&&\begin{bmatrix}
				\Abf_0 & \Bbf_0 \\
				\Bbf_0\T & -\Tbf+m^{-1}c_0\Ibf_{\! m}\\
			\end{bmatrix}+
			\sum_{k\in\Ecal\cup\Ical}{\tau_k
				\begin{bmatrix}
					\Abf_k & \Bbf_k \\
					\Bbf_k\T & m^{-1}c_k\Ibf_{\! m}\\
			\end{bmatrix}}+ \nonumber\\
			& && \quad\quad	\kappa\sum_{k\in\Vcal}{\nuht_k
				\begin{bmatrix}
					\!\Fbf_{\!k} & \Gbf_{k} \\
					\Gbf_{k}\T & m^{-1}h_k\Ibf_{\! m}\\
			\end{bmatrix}}\succeq 0,\label{par_dual_b}\\
			& &&[\wbf_1,\ldots,\wbf_n]\T\left(\Abf_0\!+\!\sum_{k\in\Ecal\cup\Ical}{\tau_k\Abf_k}\!+\!\kappa\sum_{k\in\Vcal}{\nuht_k\Tp\Fbf_{\!k}}\right)[\wbf_1,\ldots,\wbf_n]\!\in\! \Dbb_n,\label{par_dual_c}\\
			& && \tau_k \geq 0, &&&&\hspace{-6cm} k\in\Ical,\label{par_dual_d}\\
			& && \kappa_{\;}\geq0,\label{par_dual_e}
		\end{align}
	\end{subequations}
	with $\kappa\in\Rbb$ representing the Lagrange multiplier associated with constraint \ref{prob_cor_1d}.
	It is straightforward to verify that $(\taubf,\kappa,\Tbf)=(\hat{\taubf},1,\hat{\Tbf})$ is a feasible point for \eqref{par_dual_a} -- \eqref{par_dual_e}, which implies \eqref{dbound} according to weak duality.
\qed\end{proof}


Note that for every dual feasible point $(\hat{\taubf},\hat{\nubf},\hat{\Tbf})\in\Rbb^{|\Ecal\cup\Ical|}\!\times\!\Rbb^{|\Vcal|}\!\times\!\Sbb_m$, according to the constraint \eqref{prob_sdp_rlt_dual_1b}, the matrix $\Abf_0+\sum_{k\in\Ecal\cup\Ical}{\hat{\tau}_k\Abf_k}+\sum_{k\in\Vcal}{\hat{\nu}_k\Fbf_{\!k}}$ is positive semidefinite and its eigenspace is a candidate for $\{\wht_i\}_{i\in\Ncal}$ which satisfies \eqref{DDcond}.
\begin{corollary}
	Let $(\hat{\Ybf},\hat{\Xbf})\in\Rbb^{n\times m}\!\times\!\Sbb_m$ and $(\hat{\taubf},\hat{\nubf},\hat{\Tbf})\in\Rbb^{|\Ecal\cup\Ical|}\!\times\!\Rbb^{|\Vcal|}\!\times\!\Sbb_m$ denote a  pair of primal and dual optimal solutions for the SDP relaxation \eqref{prob_sdp_rlt_1a} -- \eqref{prob_sdp_rlt_1d} and its dual \eqref{prob_sdp_rlt_dual_1a} -- \eqref{prob_sdp_rlt_dual_1d}. Assume that \eqref{DDcond} is satisfied and strong duality holds between \eqref{prob_sdp_rlt_1a} -- \eqref{prob_sdp_rlt_1d} and \eqref{prob_sdp_rlt_dual_1a} -- \eqref{prob_sdp_rlt_dual_1d}. If $(\Ybf\!,\Xbf)\in\Rbb^{n\times m}\!\times\!\Sbb_m$ is an optimal solution for \eqref{prob_cor_1a} -- \eqref{prob_cor_1f}, then:
	\begin{align}
		\bar{q}_0(\Ybf\!,\Xbf)=\bar{q}_0({\hat{\Ybf}}\!,{\hat{\Xbf}}), \label{dbound2}
	\end{align}
	i.e., the parabolic relaxation with SDP-induced basis is as strong as SDP relaxation for the minimization of $\bar{q}_0$.
\end{corollary}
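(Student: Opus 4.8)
The plan is to sandwich the optimal value of the parabolic relaxation \eqref{prob_cor_1a}--\eqref{prob_cor_1f} between two matching bounds. First I would obtain the lower bound directly from Theorem \ref{thm_cord}: since $(\Ybf,\Xbf)$ is feasible for the parabolic relaxation and \eqref{DDcond} holds by hypothesis, Theorem \ref{thm_cord} gives $\bar{q}_0(\Ybf,\Xbf)\geq\tr\{\hat{\Tbf}\}$. Invoking the assumed strong duality between \eqref{prob_sdp_rlt_1a}--\eqref{prob_sdp_rlt_1d} and its dual, the dual objective $\tr\{\hat{\Tbf}\}$ equals the primal SDP optimum $\bar{q}_0(\hat{\Ybf},\hat{\Xbf})$, so $\bar{q}_0(\Ybf,\Xbf)\geq\bar{q}_0(\hat{\Ybf},\hat{\Xbf})$.

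For the reverse inequality, I would show that the SDP optimizer $(\hat{\Ybf},\hat{\Xbf})$ is itself feasible for the parabolic relaxation, so that the parabolic optimum cannot exceed $\bar{q}_0(\hat{\Ybf},\hat{\Xbf})$. This reduces to checking each parabolic constraint at $(\hat{\Ybf},\hat{\Xbf})$. Constraints \eqref{prob_cor_1b}--\eqref{prob_cor_1c} coincide with the SDP constraints \eqref{prob_sdp_rlt_1b}--\eqref{prob_sdp_rlt_1c} and hold by primal SDP feasibility. The aggregated inequality \eqref{prob_cor_1d} holds because each term $\bar{r}_k(\hat{\Ybf},\hat{\Xbf})\leq0$ by \eqref{prob_sdp_rlt_1d}, while the multipliers satisfy $\hat{\nu}_k\geq0$ by dual feasibility \eqref{prob_sdp_rlt_dual_1d}, so the nonnegative combination is nonpositive. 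Finally, the parabolic inequalities \eqref{prob_cor_1e}--\eqref{prob_cor_1f} follow from the semidefinite constraint $\hat{\Xbf}\succeq\hat{\Ybf}\hat{\Ybf}\T$ of \eqref{prob_sdp_rlt_1e}: testing this matrix inequality against the vectors $\wht_i\pm\wht_j$ yields $(\wht_i\pm\wht_j)\T\hat{\Xbf}(\wht_i\pm\wht_j)\geq\|(\wht_i\pm\wht_j)\T\hat{\Ybf}\|_2^2$, since for any $\vbf$ one has $\vbf\T\hat{\Ybf}\hat{\Ybf}\T\vbf=\|\vbf\T\hat{\Ybf}\|_2^2$.

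Combining the two bounds gives $\bar{q}_0(\Ybf,\Xbf)=\bar{q}_0(\hat{\Ybf},\hat{\Xbf})$, which is \eqref{dbound2}. Conceptually, the argument rests on the observation that the parabolic relaxation is a genuine relaxation of the SDP: for \emph{any} basis its feasible set contains the SDP feasible set, because the PSD cut dominates every parabolic cut and the aggregated valid inequality is implied by the individual ones, so the easy direction $\bar{q}_0(\Ybf,\Xbf)\leq\bar{q}_0(\hat{\Ybf},\hat{\Xbf})$ is immediate. The role of the specific SDP-induced basis, which enters only through \eqref{DDcond}, is to make the lower bound of Theorem \ref{thm_cord} equal the SDP value and thereby pin the two optima together. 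I do not anticipate a serious obstacle; the only point deserving care is confirming that the single aggregated cut \eqref{prob_cor_1d} suffices in place of the full family \eqref{prob_sdp_rlt_1d}, which is exactly where nonnegativity of the dual-optimal $\hat{\nubf}$ is used.
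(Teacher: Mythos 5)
Your proposal is correct and matches the argument the paper intends: the corollary is stated without an explicit proof precisely because it follows by sandwiching, exactly as you do, the lower bound from Theorem \ref{thm_cord} together with strong duality against the upper bound obtained from the fact that the SDP optimum $(\hat{\Ybf},\hat{\Xbf})$ remains feasible for \eqref{prob_cor_1a}--\eqref{prob_cor_1f}. Your verification of feasibility of $(\hat{\Ybf},\hat{\Xbf})$ for the aggregated cut \eqref{prob_cor_1d} via $\hat{\nu}_k\geq 0$ and for the parabolic cuts \eqref{prob_cor_1e}--\eqref{prob_cor_1f} via testing $\hat{\Xbf}\succeq\hat{\Ybf}\hat{\Ybf}\T$ against $\wht_i\pm\wht_j$ is precisely the routine check the paper leaves to the reader.
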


While it may seem counter-intuitive to solve an SDP to achieve the strength of SDP bounds through the parabolic relaxation, the potential advantage is highlighted for algorithms that require the solution of a sequence of similar relaxations, e.g., branch-and-bound or penalty reformulation methods. After a one-time computation of an optimal basis, the cheaper-to-compute parabolic relaxation can replace costly semidefinite programming (SDP) relaxations.



\section{Comparisons with Related Relaxations, Restrictions, and Approximations}\label{sec:related}
In this section, we discuss the relation between the parabolic relaxation and other approaches. We cover the well-known SDP relaxation, as well as:
\begin{itemize}
	\item {\it Outer-approximations of SDP:} Two variants of SOCP relaxation as well as DD restriction of the dual space. 
	\item {\it Inner-approximations {of SDP}:} DD and SDD approximation methods. 
\end{itemize}
We show that in many ways, the parabolic relaxation improves upon the aforementioned approaches.

\subsection{SDP relaxation}


It is standard to relax the non-convex constraints \eqref{QCQP_tensor_lift_d} as the SDP constraint
\begin{align}
	\Xbf\succeq \Ybf\Ybf^{\!\top}
	\quad\Leftrightarrow\quad
	\begin{bmatrix}
		\!\!\!\Xbf & \!\!\!\Ybf\\
		\Ybf^{\!\top} & \Ibf_{\! m}\\
	\end{bmatrix}\succeq 0,\label{sdp_cons}
\end{align}
leading to an SDP relaxation of \eqref{QCQP_vector_a} -- \eqref{QCQP_vector_c}. Both SDP and parabolic relaxations share CBP which makes them compatible with objective penalization and Algorithm \ref{alg:1}. 

\vspace{2mm}

\begin{itemize}
	\item[a)] {\it Quality:} In terms of quality, the parabolic relaxation is dominated by SDP relaxation, i.e.,
	\begin{align}
		\!\!\!\!\Xbf\!\succeq\!\Ybf\Ybf\T \!\Rightarrow\! 
		\left\{\begin{matrix}
			(\wbf_i\!+\!\wbf_j)^{\!\top}\Xbf(\wbf_i\!+\!\wbf_j)\geq \|(\wbf_i\!+\!\wbf_j)^{\!\top}\Ybf\|_2^2,\quad \forall i,j\!\in\!\Ncal,\\
			(\wbf_i\!-\!\wbf_j)^{\!\top}\Xbf(\wbf_i\!-\!\wbf_j)\geq \|(\wbf_i\!-\!\wbf_j)^{\!\top}\Ybf\|_2^2,\quad \forall i,j\!\in\!\Ncal.
		\end{matrix}\right.
	\end{align}
	In section \ref{coord}, it is shown that an appropriate choice of $\{\wbf_i\}_{i\in\Ncal}$ results in a relaxation as strong as SDP. However, finding such basis can be as costly as solving SDP relaxation, and therefore, it may be justifiable for cases where a sequence of problems should be solved (e.g., through Algorithm \ref{alg:1} or branch-and-bound search). 
	
	\vspace{2mm}
	\item[b)] {\it Complexity:}
	In the presence of sparsity, large-scale SDP constraints of the form \eqref{sdp_cons} are decomposable into smaller ones through graph-theoretic analysis \cite{fukuda2001exploiting,nakata2003exploiting}. Despite this possibility, a large body of literature recommends the use of simpler relaxations such as  SOCP, which allow the omission of those elements in $\Xbf$ that do not participate in the objective \eqref{QCQP_tensor_lift_a} and constraints \eqref{QCQP_tensor_lift_b} -- \eqref{QCQP_tensor_lift_c} (see \cite{majumdar2019recent}). For sparse problems, the parabolic relaxation enjoys this advantage as well, which makes it a viable alternative to SDP relaxation for a broad class of real-world problems. For dense problems, the complexity of imposing \eqref{sdp_cons} versus $n^2$ convex quadratic constraints of the form \eqref{QCQP_tensor_para_d} -- \eqref{QCQP_tensor_para_e} is highly-dependent on the choice of numerical algorithm and the type of application. Such comparison requires and in-depth and case-by-case analysis of computational complexity which is beyond the scope of this paper.
	
\end{itemize}

\begin{figure}[p!]	
\begin{adjustbox}{angle=90}
	\scalebox{0.950}{
		\begin{tabular}{ 
				|lccccccccccccccccc| 
			}
			\hline 
			&
			\cellcolor{red!25}\small{The matrix} &  &
			\cellcolor{red!25}\small{$(n-1)\times (n-1)$} &  &
			&  &
			\cellcolor{red!25}\small{$2\times 2$ principal} &  &
			\cellcolor{red!25}\!\!\!\small{Diagonal elements}\!\!\! &  &
			& &
			& &
			& &
			\\ 
			&
			\cellcolor{red!25}\small{$\Xbf-\Ybf\Ybf\T$} &  &
			\cellcolor{red!25}\small{principal submatrices} &  &
			&  &
			\cellcolor{red!25}\small{submatrices of} &  &
			\cellcolor{red!25}\small{\!of $\Xbf\!\!-\!\Ybf\!\Ybf\T$\!\!} &  &
			& &
			& &
			& &
			\\
			&
			\cellcolor{red!25}\small{is PSD.} &  &
			\cellcolor{red!25}\small{\!of $\Xbf\!\!-\!\Ybf\!\Ybf\T$\!\! are PSD.\!} &  &
			&  &
			\cellcolor{red!25}\small{\!\!$\Xbf\!\!-\!\Ybf\Ybf\T\!\!$ are PSD.\!\!} &  &
			\cellcolor{red!25}\small{are positive.} &  &
			& &
			& &
			& &
			\\
			\hline 
			\hline 
			\!Number:\!\!\!\! &
			\cellcolor{red!25}\!\!$\Big(\begin{array}{c} n \\ n \end{array}\Big)$\!\! &  &
			\cellcolor{red!25}\!\!$\Big(\begin{array}{c} n \\ n-1 \end{array}\Big)$\!\! &  &
			&  &
			\cellcolor{red!25}\!\!$\Big(\begin{array}{c} n \\ 2 \end{array}\Big)$\!\! &  &
			\cellcolor{red!25}\!\!$\Big(\begin{array}{c} n \\ 1 \end{array}\Big)$\!\! &  &
			& &
			& &
			& &
			\\
			\!Type:\!\!\rule{0pt}{15pt}
			&
			\cellcolor{red!25}\!\!\small{$(n\!+\!m)\!\times\!(n\!+\!m)$}\!\! & &
			\cellcolor{red!25}\!\!\!\small{$(n\!-\!1\!+\!m)\!\times\!(n\!-\!1\!+\!m)$}\!\!\! &  &
			& &
			\cellcolor{red!25}\!\!\small{$(2\!+\!m)\!\times\!(2\!+\!m)$}\!\! &  &\
			\cellcolor{red!25}Quadratic &  &
			& &
			& &
			& & 
			\\
			\!Set:\!\!\rule{0pt}{20pt}
			&
			\cellcolor{red!25}$\Scal^{\;(n+m)}_{n,m}$ & \!\!$\subseteq$\!\! & 
			\cellcolor{red!25}$\Scal^{\;(n+m-1)}_{n,m}$ & \!\!$\subseteq$\!\! &
			\!\!\dots\!\! & \!\!$\subseteq$\!\!  &
			\cellcolor{red!25}$\Scal^{\;(2+m)}_{n,m}$ & \!\!$\subseteq$\!\! &
			\cellcolor{red!25}$\Scal^{\;(1+m)}_{n,m}$ &  &
			& &
			& &
			& &
			\\
			\rule{0pt}{12pt}
			&
			& & 
			& &
			& & \multicolumn{1}{r}{\begin{turn}{-45}$\subseteq$\end{turn} }
			& & \multicolumn{1}{l}{\begin{turn}{45}\!\!\!\!$\subseteq$\end{turn} }
			& &
			& &
			& &
			& &
			\\
			\rule{0pt}{10pt}
			&
			\begin{turn}{90}$=$\end{turn} &  & 
			\begin{turn}{90}$\subseteq$\end{turn} &  &
			& &
			\begin{turn}{90}$\subseteq$\end{turn} &  
			\cellcolor{green!25}$\Pcal_{n,m}$ &
			\begin{turn}{90}$\subseteq$\end{turn} &  &
			& &
			& &
			& &
			\\
			\rule{0pt}{12pt}
			&
			& & 
			& &
			& & \multicolumn{1}{r}{\begin{turn}{45}$\subseteq$\end{turn} }
			& & \multicolumn{1}{l}{\!\!\!\begin{turn}{135}$\not\subseteq$\end{turn}  }
			& & 
			& &
			& &
			& &
			\\
			\!Set:\!\!\rule{0pt}{15pt}
			&
			\cellcolor{blue!25}$\bar{\Scal}^{\;(n+m)}_{n,m}$ & \!\!$\subseteq$\!\! & 
			\cellcolor{blue!25}$\bar{\Scal}^{\;(n-1+m)}_{n,m}$ & \!\!$\subseteq$\!\! &
			\!\!\dots\!\! & \!\!$\subseteq$\!\!  &
			\cellcolor{blue!25}$\bar{\Scal}^{\;(2+m)}_{n,m}$ & \!\!$\subseteq$\!\! &
			\cellcolor{blue!25}$\bar{\Scal}^{\;(1+m)}_{n,m}$ & \!\!$\subseteq$\!\! &
			\!\!\dots\!\! & \!\!$\subseteq$\!\!  &
			\cellcolor{blue!25}$\bar{\Scal}^{\;(3)}_{n,m}$ & \!\!$\subseteq$\!\! &
			\cellcolor{blue!25}$\bar{\Scal}^{\;(2)}_{n,m}$ & \!\!$\subseteq$\!\! &
			\cellcolor{blue!25}$\bar{\Scal}^{\;(1)}_{n,m}$ \\
			\!Type:\!\!\rule{0pt}{15pt}
			&
			\cellcolor{blue!25}\!\!\small{$(n\!+\!m)\!\times\!(n\!+\!m)$}\!\! & &
			\cellcolor{blue!25}\!\!\!\small{$(n\!-\!1\!+\!m)\!\times\!(n\!-\!1\!+\!m)$}\!\!\! &  &
			& &
			\cellcolor{blue!25}\!\!\small{$(2\!+\!m)\!\times\!(2\!+\!m)$}\!\! &  &\
			\cellcolor{blue!25}\!\!\small{$(1\!+\!m)\!\times\!(1\!+\!m)$}\!\! &  &
			& &
			\cellcolor{blue!25}\!\!\small{$3\!\times\! 3$}\!\! & &
			\cellcolor{blue!25}\!\!\small{SOCP}\!\! & & 
			\cellcolor{blue!25}\!\!\small{Linear}\!\! \\
			\!Number:\!\!\!\!
			&
			\cellcolor{blue!25}\!\!$\Big(\begin{array}{c} n+m \\ n+m \end{array}\Big)$\!\! &  &
			\cellcolor{blue!25}\!\!$\Big(\begin{array}{c} n+m \\ n-1+m \end{array}\Big)$\!\! &  &
			&  &
			\cellcolor{blue!25}\!\!$\Big(\begin{array}{c} n+m \\ 2+m \end{array}\Big)$\!\! &  &
			\cellcolor{blue!25}\!\!$\Big(\begin{array}{c} n+m \\ 1+m \end{array}\Big)$\!\! &  &
			& &
			\cellcolor{blue!25}\!\!$\Big(\begin{array}{c} n+m \\ 3 \end{array}\Big)$\!\!\! &  &
			\cellcolor{blue!25}\!\!$\Big(\begin{array}{c} n+m \\ 2 \end{array}\Big)$\!\!\! &  &
			\cellcolor{blue!25}\!\!$\Big(\begin{array}{c} n+m \\ 1 \end{array}\Big)$\!\!\!\\
			\hline
			\hline
			&
			\cellcolor{blue!25}\small{The matrix} &  &
			\cellcolor{blue!25}\!\!\small{$(n\!-\!1\!+\!m)\!\times\!(n\!-\!1\!+\!m)$}\!\! &  &
			&  &
			\cellcolor{blue!25}\!\!\small{$(2\!+\!m)\!\times\!(2\!+\!m)$}\!\! &  &
			\cellcolor{blue!25}\!\!\small{$(1\!+\!m)\!\times\!(1\!+\!m)$}\!\! &  &
			& &
			\cellcolor{blue!25}\!\!\small{$3\!\times\!3$}\!\!
			& &
			\cellcolor{blue!25}\!\!\small{$2\!\times\!2$}\!\!
			& &
			\cellcolor{blue!25}\!\!\small{Diagonal}\!\!
			\\ 
			&
			\cellcolor{blue!25}\!\!\!\small{
				$\begin{bmatrix}
					\Xbf\Tp & \Ybf\\
					\Ybf\T & \Ibf\\
				\end{bmatrix}$}\small{is PSD.}\!\!\! &  &
			\cellcolor{blue!25}\small{principal} &  &
			&  &
			\!\!\!\cellcolor{blue!25}\small{principal}\!\!\! &  &
			\!\!\!\cellcolor{blue!25}\small{principal}\!\!\! &  &
			& &
			\!\!\!\cellcolor{blue!25}\small{principal}\!\!\!
			& &
			\!\!\!\cellcolor{blue!25}\small{principal}\!\!\!
			& &
			\!\!\!\cellcolor{blue!25}\small{elements of}\!\!\!
			\\
			&
			\cellcolor{blue!25} &  &
			\cellcolor{blue!25}\small{submatrices of} &  &
			&  &
			\!\!\!\cellcolor{blue!25}\small{submatrices of}\!\!\! &  &
			\!\!\!\cellcolor{blue!25}\small{submatrices of}\!\!\! &  &
			& &
			\!\!\!\cellcolor{blue!25}\small{submatrices of}\!\!\!
			& &
			\!\!\!\cellcolor{blue!25}\small{submatrices of}\!\!\!
			& &
			\cellcolor{blue!25}\!\!\!\small{
				$\begin{bmatrix}
					\Xbf\Tp & \Ybf\\
					\Ybf\T & \Ibf\\
				\end{bmatrix}$}
			\\
			&
			\cellcolor{blue!25} & &
			\!\!\!\cellcolor{blue!25}\small{\!\!\! 
				$\begin{bmatrix}
					\Xbf\Tp & \Ybf\\
					\Ybf\T & \Ibf\\
				\end{bmatrix}$\!\! are PSD.\!\!}\!\!\! &  &
			& &
			\cellcolor{blue!25}\small{\!\!
				$\begin{bmatrix}
					\Xbf\Tp & \Ybf\\
					\Ybf\T & \Ibf\\
				\end{bmatrix}$
				are PSD.\!\!} &  &
			\!\!\!\cellcolor{blue!25}\small{\!\!\! 
				$\begin{bmatrix}
					\Xbf\Tp & \Ybf\\
					\Ybf\T & \Ibf\\
				\end{bmatrix}$\!\! are PSD.\!\!}\!\!\!  &  &
			& &
			\!\!\!\cellcolor{blue!25}\small{\!\!\! 
				$\begin{bmatrix}
					\Xbf\Tp & \Ybf\\
					\Ybf\T & \Ibf\\
				\end{bmatrix}$\!\! are PSD.\!\!}\! 
			& &
			\!\!\!\cellcolor{blue!25}\small{\!\!\! 
				$\begin{bmatrix}
					\Xbf\Tp & \Ybf\\
					\Ybf\T & \Ibf\\
				\end{bmatrix}$\!\! are PSD.\!\!}\! 
			& &
			\!\!\cellcolor{blue!25}\small{are positive.}\!\!\!
			\\
			\hline
			\hline
			& \cellcolor{blue!25}\small{SDP}
			& & 
			& &
			& & \!\!\!\!\cellcolor{blue!25}\small{SOCP relaxation in}\!\!\!\!
			& & \cellcolor{blue!25}\small{SOCP relaxation}
			& & 
			& &
			& &  \cellcolor{blue!25}\small{SOCP relaxation}
			& &\\
			& \cellcolor{blue!25}\small{relaxation}
			& & 
			& &
			& & \cellcolor{blue!25}\small{homogeneous case}
			& & \cellcolor{blue!25}\small{ for $m=1$}
			& & 
			& &
			& &  \cellcolor{blue!25}\small{in general case}
			& &
			\\
			\hline
	\end{tabular}}
 \end{adjustbox}
	\caption{
		Comparison between the two convex relaxation hierarchies in Definition \ref{def_hie} with basic parabolic relaxation in terms of quality, the number and the type of conic inequalities in the worst case (dense matrices).}
	\vspace{-5mm}
	\label{fig_compare}
\end{figure}

\begin{figure*}[t]
	\centering
	\includegraphics[height=0.60\textwidth]{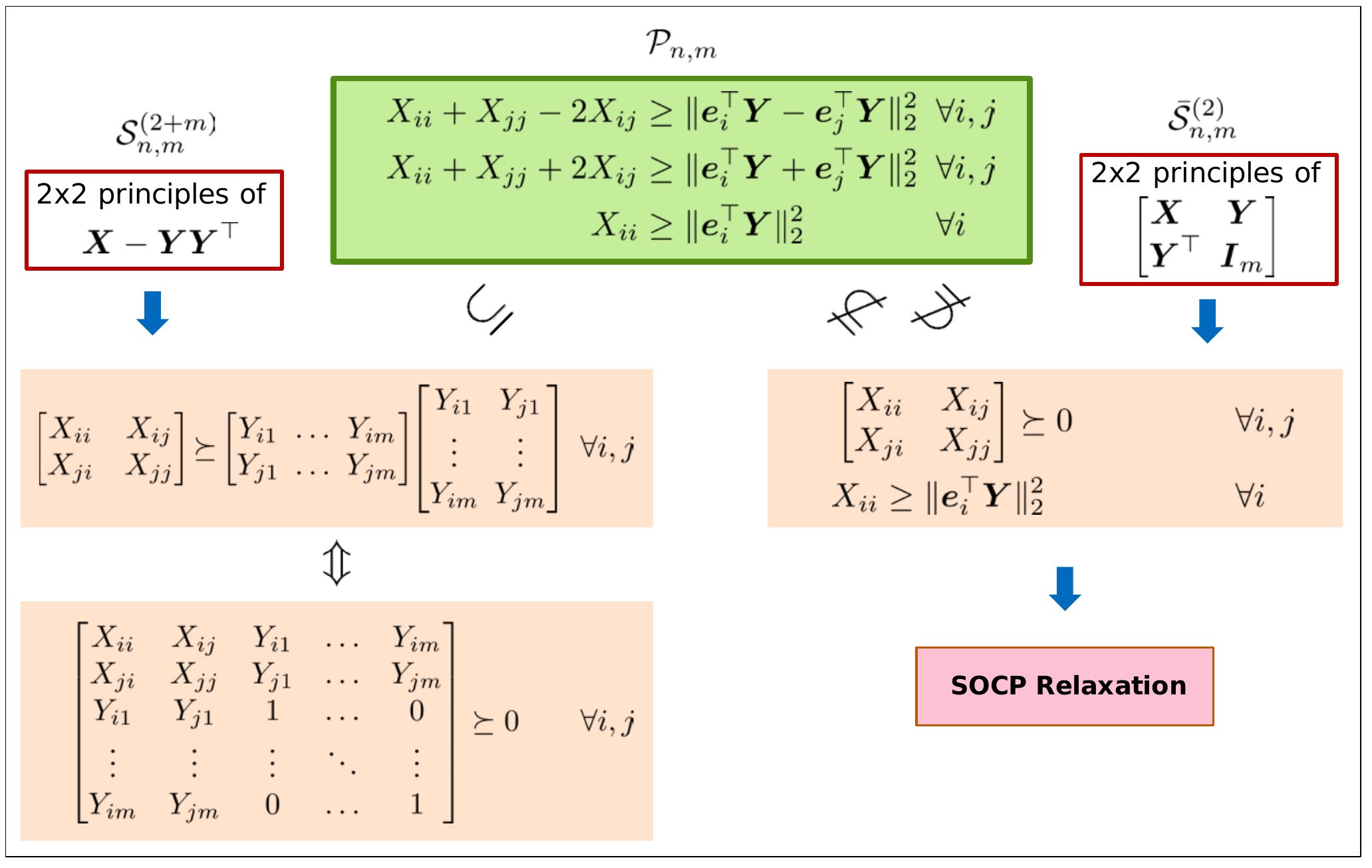}
	\caption{
		Comparison between the parabolic and two other relaxations of $\Xbf=\Ybf\Ybf^{\top}$.
	}
	\label{fig_comp}
\end{figure*}

\vspace{2mm}
\subsection{Outer-approximations of SDP relaxation}
To alleviate the cost of solving large-scale semidefinite programs, it is common-practice to substitute the constraint \eqref{sdp_cons} with a set of conic inequalities of the form:
\begin{align}
	\Ob^{\top}_{k}
	\begin{bmatrix}
		\!\!\!\Xbf & \!\!\!\Ybf\\
		\Ybf^{\!\top} & \Ibf_m\\
	\end{bmatrix}
	\Ob_{ k}
	\succeq 0, \qquad\qquad \forall k\in \Ocal,\label{O}
\end{align}
where $\{\Ob_{k}\in\Rbb^{(n+m)\times n_k}\}_{k\in\Vcal}$ are arbitrary matrices, each with $n+m$ rows \cite{ahmadi2017dsos}. Our approach is closely related to this method, since the parabolic relaxation inequalities \eqref{QCQP_tensor_para_d} -- \eqref{QCQP_tensor_para_e} can be reformulated as:
\begin{align}
	\obf^{\top}_{k}
	(\Xbf-\Ybf\Ybf^{\!\top})
	\obf_{k}
	\succeq 0, \qquad\qquad \forall k\in \Ocal, \label{WXY}
\end{align}
where
\begin{align}
	\{\obf_k\}_{k\in\Ocal}\triangleq
	\big\{\wbf_i+\wbf_j\,|\,i,j\in\{1,\ldots,n\}\big\}\cup\big\{\wbf_i-\wbf_j\,|\,i,j\in\{1,\ldots,n\}\big\}.
\end{align}
We are motivated by the observation that the resulting inequalities in this case 
are convex quadratic. 

To further assess the proposed parabolic relaxation, the following definition introduces two hierarchies of convex relaxations. 
\begin{definition}\label{def_hie}
	Define $\Sbb^{+}_{o;\,k}$ as the set of symmetric $o\times o$ matrices whose every $k\times k$ principle submatrix is positive semidefinite. Define the hierarchy
	$\bar{\Scal}^{(n+m)}_{n,m} \subseteq \bar{\Scal}^{(n-1+m)}_{n,m}$ 
	$\subseteq \ldots \subseteq \bar{\Scal}^{(1)}_{n,m}$
	as:
	\begin{align}
		\bar{\Scal}^{(k)}_{n,m}\triangleq\left\{(\Ybf,\Xbf)\in\Rbb^{n\times m}\times\Sbb_n\;\Big|\;
		\sm{
			\begin{bmatrix}
				\Xbf\Tp & \Ybf\Tp \\
				\Ybf\T & \Ibf_{\! m}\\
		\end{bmatrix}}{24}\!
		\in\Sbb^{+}_{(n+m);\,k}\right\}.
	\end{align}
	Define, also
	$\Scal^{(n+m)}_{n,m} \subseteq \Scal^{(n-1+m)}_{n,m} \subseteq \ldots \subseteq \Scal^{(1+m)}_{n,m}$
	as:
	\begin{align}
		\Scal^{(k)}_{n,m}\triangleq\{(\Ybf,\Xbf)\in\Rbb^{n\times m}\times\Sbb_n\;|\;\Xbf-\Ybf\Ybf\T\in\Sbb^{+}_{n;\,k}\}.
	\end{align}
	Lastly, define $\Pcal_{n,m}$ as the set of all pairs $(\Ybf,\Xbf)\in\Rbb^{n\times m}\times\Sbb_n$ that satisfy \eqref{QCQP_tensor_para_stan_d} -- \eqref{QCQP_tensor_para_stan_e}. 
\end{definition}
According to Schur complement we have $\bar{\Scal}^{(n+m)}_{n,m}=\Scal^{(n+m)}_{n,m}$; however, the two hierarchies are different in terms of strength, the number of conic inequalities, and the complexity of conic inequalities. Figure \eqref{fig_compare} illustrates a general comparison between members of $\{\Scal^{(k)}_{n,m}\}_{k\in\{1+m,\ldots,n+m\}}$, $\{\bar{\Scal}^{(k)}_{n,m}\}_{k\in\{1,\ldots,n+m\}}$, and $\Pcal_{n,m}$. In the remainder of this section, we will compare the parabolic relaxation with these three {different} outer-approximations.

\vspace{3mm}
\noindent{\bf Dual DD programming:} which is a restriction in the dual domain leads to the enforcement of:
\begin{align}
	\begin{bmatrix}
		\Abf_0 & \Bbf_0 \\
		\Bbf_0\T & -\Tbf+m^{-1}c_0\Ibf_{\! m}\\
	\end{bmatrix}+
	\sum_{k\in\Ecal\cup\Ical}{\tau_k
		\begin{bmatrix}
			\Abf_k & \Bbf_k \\
			\Bbf_k\T & m^{-1}c_k\Ibf_{\! m}\\
	\end{bmatrix}}\in\Dbb_{n+m},
\end{align}
in lieu of \eqref{QCQP_tensor_para_dual_b}. This is equivalent to a relaxation of the form \eqref{O} in primal domain corresponding to the choice:
\begin{align}
	\!\!\!\!\{\obf_k\}_{k\in\Ocal}\!=\! \big\{\ebf_i\!+\!\ebf_j\,|\,i,j\in\{1,\ldots,n\!+\!m\}\big\}\!\cup\!\big\{\ebf_i\!-\!\ebf_j\,|\,i,j\in\{1,\ldots,n\!+\!m\}\big\}.\!\!\!\!
\end{align}

Here is how this method compares with the parabolic relaxation approach:
\begin{itemize}
	\item[a)] {\it Quality:} By investigating the dual parabolic relaxation \eqref{QCQP_tensor_para_dual_a} -- \eqref{QCQP_tensor_para_dual_d}, it can be easily observed that this approach is dominated by the parabolic relaxation approach for $m>0$, and it is effectively equivalent to the parabolic relaxation for the homogeneous case.
	
	\vspace{2mm}
	\item[b)] {\it Complexity:} The dual DD programming enjoys lower computational complexity due to its reliance on linear constraints.
\end{itemize}

\vspace{2mm}
\noindent{\bf First variant of SOCP relaxation (Dual SDD programming):}
The first variant of SOCP relaxation is $\bar{\Scal}^{(2)}_{n,m}$, which in the worst case involves 
$\small\Big(\begin{array}{c} n+m \\ 2 \end{array}\Big)$ conic inequalities, compared to $n^2$ convex quadratic inequalities for the parabolic relaxation.  Here is how this method compares with the parabolic relaxation approach:
\begin{itemize}
	\item[a)] {\it Quality:} Unless in the homogeneous case, this relaxation is not stronger than the parabolic relaxation in general, which is demonstrated by Example \ref{exm_1} and benchmark QCQP problems in \cite[Section 3.1]{parabolic_part2}. 
	
	\vspace{2mm}
	\item[b)] {\it Complexity:} This variant of SOCP relaxation expands the sparsity pattern of the problem by adding an additional clique of size $m\times m$. This can potentially increase complexity.
\end{itemize}

\vspace{3mm}
\noindent{\bf Second variant of SOCP relaxation:}
This variant of SOCP relaxation involves vectorization of $\Ybf$ by considering a variable:
\begin{align}
	\ybf\triangleq[(\Ybf\!\ebf_1)\T,\; (\Ybf\!\ebf_2)\T,\; \ldots,\; (\Ybf\!\ebf_m)\T]^{\top}\in\Rbb^{mn},
\end{align}
as well as a lifted matrix variable representing $\ybf\ybf^{\top}\in\Sbb_{mn}$. Here is how this method compares with the parabolic relaxation approach:
\begin{itemize}
	\item[a)] {\it Quality:} A direct comparison between this variant of SOCP and the ones introduced in Definition \ref{def_hie} is not straightforward because the lifted variable belongs to a higher-dimensional space $\Sbb_{mn}$.
	
	\vspace{2mm}
	\item[b)] {\it Complexity:} This SOCP variant can negatively affect the sparsity pattern of the problem by requiring a very large number of scalar variables accounting for the elements of $\ybf\ybf^{\top}\in\Sbb_{mn}$.
\end{itemize}

\vspace{3mm}
\noindent{\bf $2\times2$ principals of $\Xbf-\Ybf\Ybf^{\top}$:}
This relaxation is denoted by $\Scal^{(2+m)}_{n,m}$ which is studied in \cite{madanipolynomial}. In \cite{madanipolynomial}, the authors show that $\Scal^{(2)}_{n,m}$ satisfies CBP for $m=1$.  Here is how this method compares with the parabolic relaxation approach:
\paragraph{Quality:}
It can be easily observed that $\Scal^{(2+m)}_{n,m}$ is always stronger than the parabolic, i.e., $\Scal^{(2+m)}_{n,m}\subseteq\Pcal_{n,m}$.

\paragraph{Complexity:} The main issue with this approach is that the resulting conic inequalities are of the type $(m+2)\times(m+2)$ positive semidefinite:
\begin{align}
	\begin{bmatrix}
		X_{ii} & X_{ij} & \ebf_i\T \Ybf\\
		X_{ij} & X_{jj} & \ebf_j\T \Ybf\\
		\Ybf\T\ebf_i & \Ybf\T\ebf_j & \Ibf_m
	\end{bmatrix}\succeq 0,
\end{align}
which are not SOCP except in the homogeneous case. Such constraints can be computationally demanding for $m>0$ and they are not compatible with conic quadratic branch-and-bound solvers \cite{gurobi}. Figure \ref{fig_comp} provides an illustration between $\Pcal_{n,m}$, $\Scal^{(2+m)}_{n,m}$, and $\bar{\Scal}^{(2)}_{n,m}$.

\subsection{Inner-approximations of SDP relaxation}
An alternative method is the employment of lower-complexity inner-approximations of SDP cone in the form of:
\begin{align}\!\!\!
	\begin{bmatrix}
		\!\!\!\Xbf & \!\!\!\Ybf\\
		\Ybf^{\!\top} & \Ibf_{\! m}\\
	\end{bmatrix}
	=\sum_{i\in \Ucal}\;
	\Ubf_{\! i}\Tp\!
	\Zbf_i\Tp
	\Ubf_{\! i}\T,
	\qquad\wedge\qquad
	\Zbf_i\Tp \succeq 0, \qquad
	\forall i\in \Ucal,
\end{align}
where 
$\{\Zbf_{\!i}\in\Sbb^+_{n_i}\}_{i\in\Ucal}$ are auxiliary positive semidefinite variables and\linebreak $\{\Ubf_{\!i}\in\Rbb^{(n+m)\times n_i}\}_{i\in\Ucal}$ are arbitrary matrices, each with $n+m$ rows \cite{ahmadi2017dsos}. The two most common inner-approximations are the DD and SDD corresponding to the following choices:
\begin{subequations}
	\begin{align}
		&\{\Ubf_{\!i}\}=
		\big\{\ebf_i\!-\!\ebf_j\;|\;\forall i\neq j\in\{1,\ldots,n+m\}\big\}\nonumber\\ 
		&\phantom{\{\Ubf_{\!i}\}}\;\cup
		\big\{\ebf_i\!+\!\ebf_j\;|\;\forall i\neq j\in \{1,\ldots,n+m\}\big\},\\
		&\{\Ubf_{\!i}\}=
		\big\{[\ebf_i\;\;\ebf_j]\;|\;\forall i\neq j\in \{1,\ldots,n+m\}\big\},
	\end{align}
\end{subequations}
respectively. Despite the broad application of DD and SDD approximations in control theory \cite{ahmadi2017dsos}, their main drawback for solving QCQP is that they are neither relaxations nor restrictions of the original non-convex feasible set $\Fcal$. Hence, in certain applications the resulting inner-approximation can be empty or exclude meaningful solutions. This is illustrated in the next section.


\section{Illustrative Examples}

In this section we present two examples illustrating the strength of the alternative relaxations and approximations as well as the sequential penalized parabolic relaxations.


\subsection{Example 1}\label{exm_1}

The following illustrative example demonstrates a comparison between the convex feasible set generated via the parabolic relaxation and other methods that are discussed in the previous sections on a simple problem. Consider an optimization problem with variables $x_1$ and $x_2$ whose feasible set $\Fcal\subset\Rbb^2$ is the intersection of two non-convex quadratic constraints:
\begin{align}
	\Fcal\triangleq\{\xbf\in\Rbb^2\mid& \; x_1^2-18x_1x_2+0.1x_2^2+2x_1-3x_2\leq 1,\nonumber\\
	& \; x_2^2+24x_1x_2-x_1+3x_2+0.1\leq 0\}.\label{eq_exmp1}
\end{align}
Following is the lifted formulation of $\Fcal$:
\begin{align}
	\Fcal^{\mathrm{lifted}}\triangleq\{(\xbf,\Xbf)\!\in\!\Rbb^2\!\times\!\Sbb_2\mid
	& \; X_{11} - 18 X_{12} + 0.1 X_{22}+2x_1 - 3x_2 \leq 1,\nonumber\\
	& \; X_{22} + 24 X_{12} - x_1 + 3x_2 + 0.1 \leq 0\}.
\end{align}
The relationship between $\xbf$ and $\Xbf$ can be imposed implicitly by restricting $(\xbf,\Xbf)$ to the intersection of $\Fcal^{\mathrm{lifted}}$ with $\bar{\Scal}^{(3)}_{2,1}$, $\bar{\Scal}^{(2)}_{2,1}$, $\Pcal_{2,1}$ or one of the the following sets:
\begin{subequations}
	\begin{align}
		&\hspace{-0.25cm}\Scal^{\mathrm{DD}}\!\triangleq\!\left\{(\xbf,\Xbf)\in\Rbb^2\!\times\!\Sbb_2\;|\;
		X_{11} \geq |X_{12}| + |x_1|,\quad\right.\nonumber\\
		&\hspace{-0.25cm} \qquad\qquad\quad\;\qquad\qquad\qquad\left. X_{22} \geq |X_{21}| + |x_2|,\quad
		1 \geq |x_1| + |x_2|\right\},\\
		&\hspace{-0.25cm}\Scal^{\mathrm{SDD}}\!\triangleq\!\Bigg\{\!(\xbf,\Xbf)\in\Rbb^2\!\times\!\Sbb_2\!\;\Bigg|\nonumber\\
		&\hspace{-0.25cm}{\small\begin{bmatrix}
				X_{11} & X_{12} & x_{1}\\
				X_{21} & X_{22} & x_{2}\\
				x_{1} & x_{2} & 1\\
		\end{bmatrix}}\!\!=\!\!
		{\small\begin{bmatrix}
				H^{(1)}_{11}\!+\!H^{(2)}_{11} & H^{(1)}_{12} & H^{(2)}_{12}\\
				H^{(1)}_{21} & H^{(1)}_{22}\!+\!H^{(3)}_{11} & H^{(3)}_{12}\\
				H^{(2)}_{21} & H^{(3)}_{21} & H^{(3)}_{22}\!+\!H^{(2)}_{22}\\
		\end{bmatrix}},\;\Hbf^{(1)}\!,\Hbf^{(2)}\!,\Hbf^{(3)}\!\succeq\!0\Bigg\},\!\!
	\end{align}
\end{subequations}
leading to SDP, SOCP (both variants), and parabolic relaxations, as well as the DD and SDD approximations, respectively. As demonstrated in Figures \eqref{fig_para}.A and \eqref{fig_para}.B, DD and SDD are neither inner- nor outer-approximations of the original feasible set. On the other hand, SOCP relaxation in \eqref{fig_para}.C offers a loose relaxation compared to the parabolic relaxation in \eqref{fig_para}.D.

\begin{figure*}[t]
	\centering
	\includegraphics[height=0.24\textwidth]{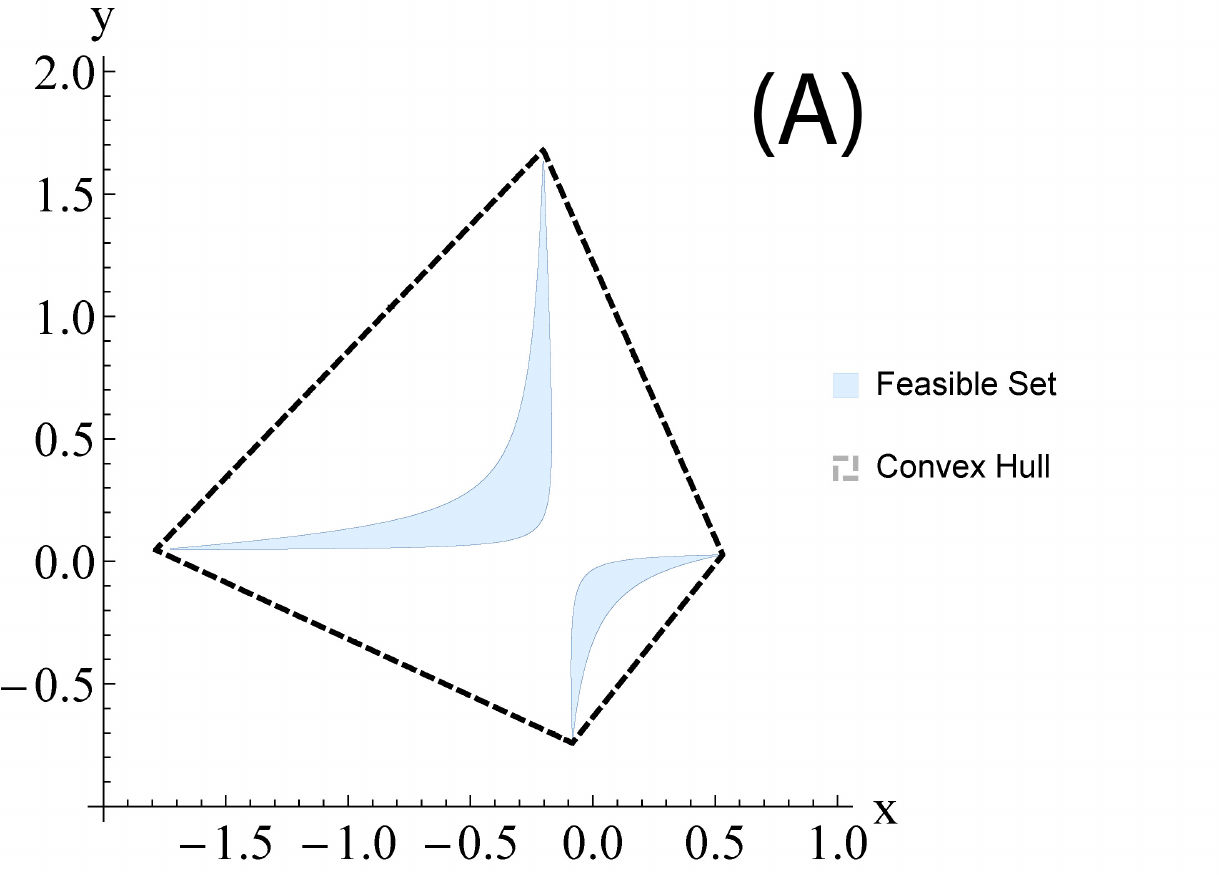}
	\includegraphics[height=0.24\textwidth]{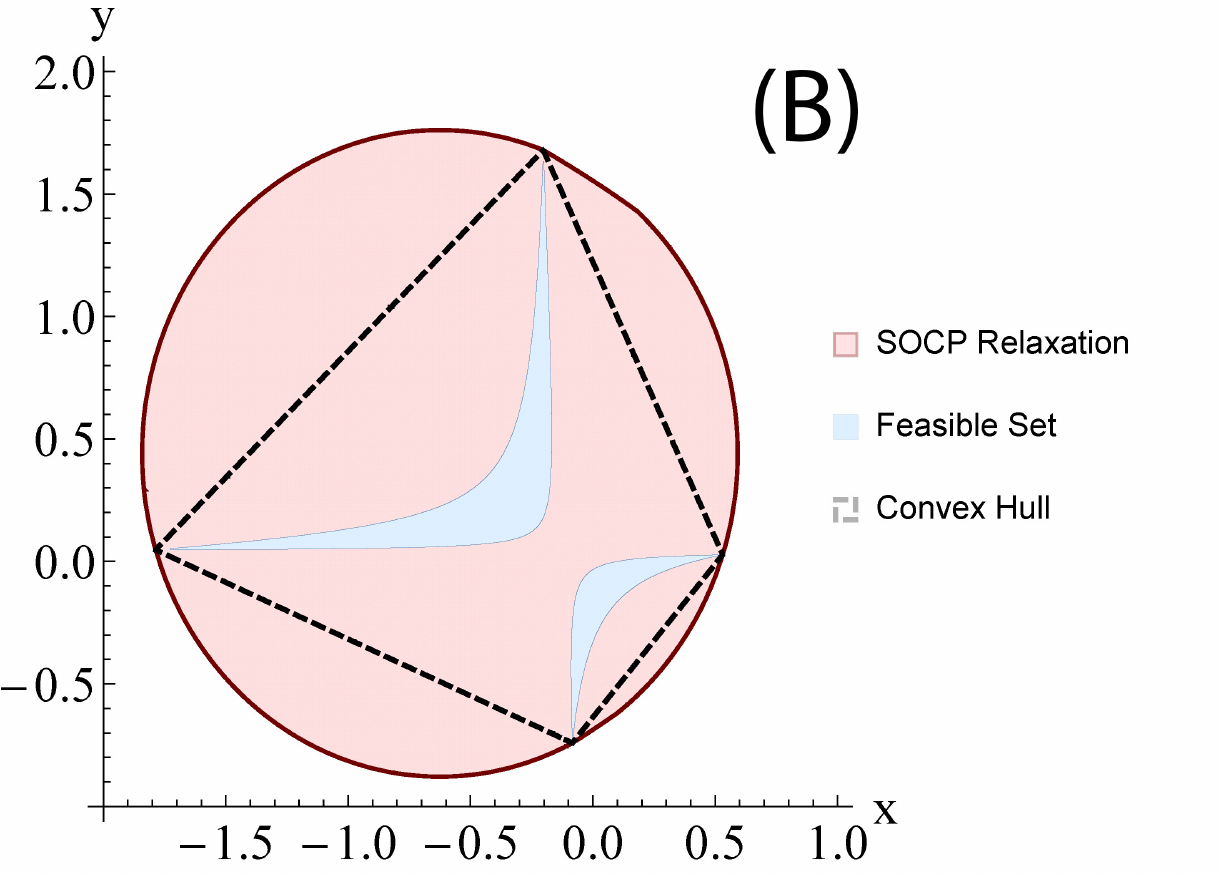}
	\includegraphics[height=0.24\textwidth]{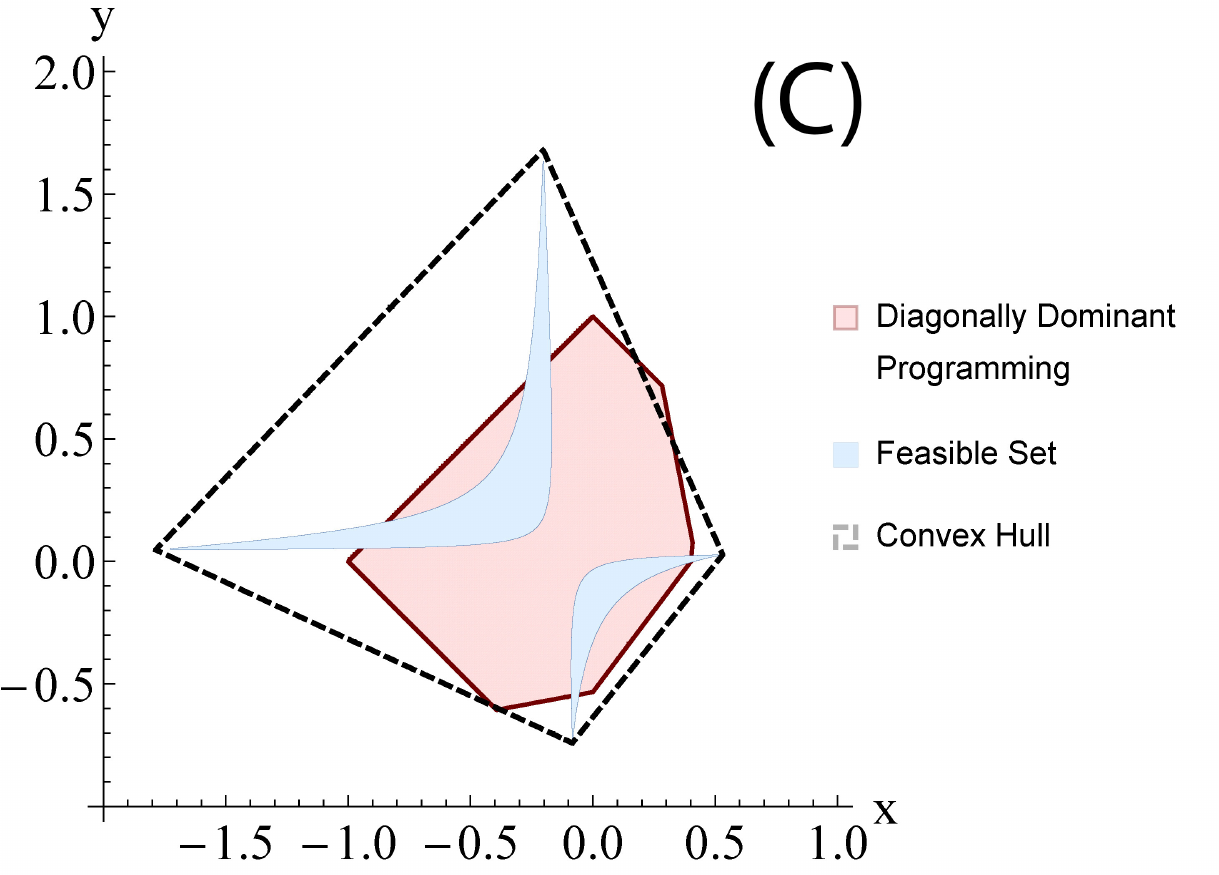}\\
	\includegraphics[height=0.24\textwidth]{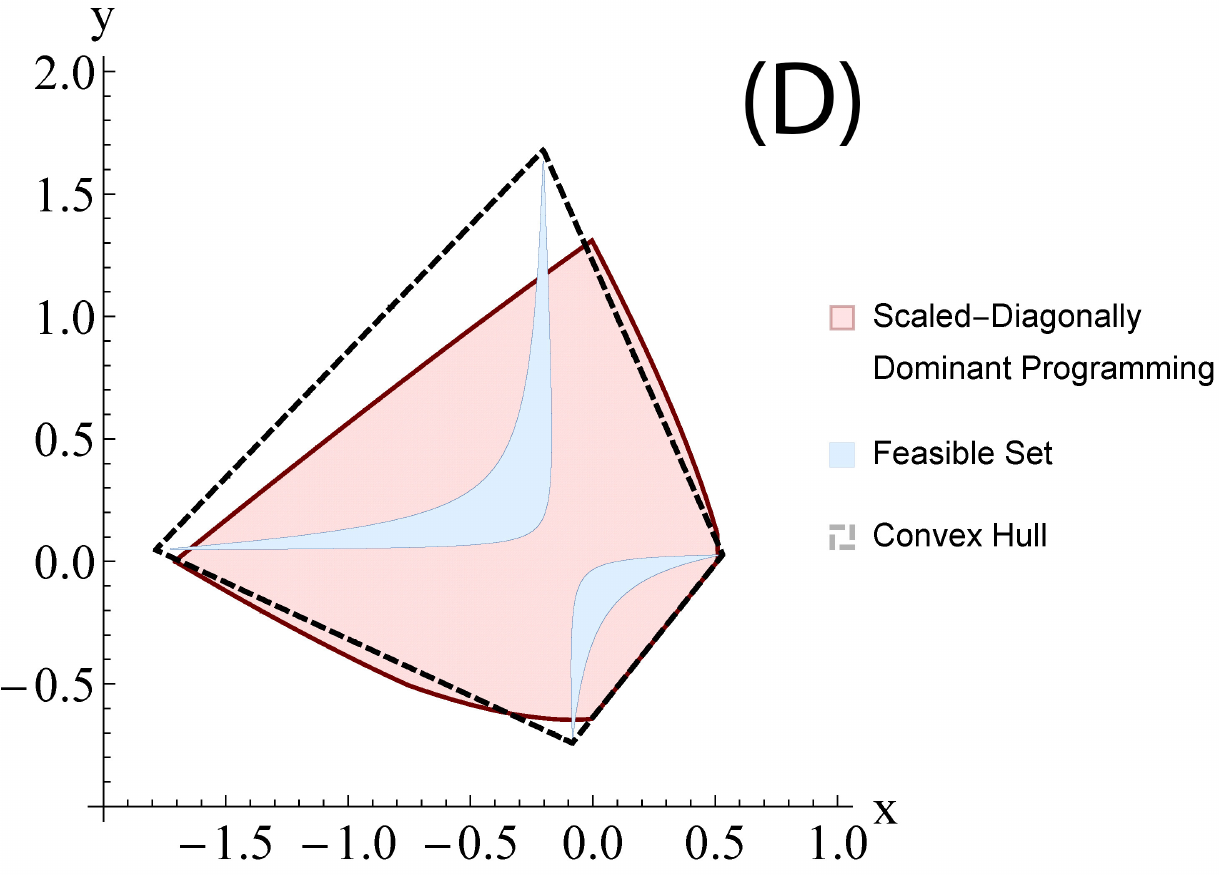}
	\includegraphics[height=0.24\textwidth]{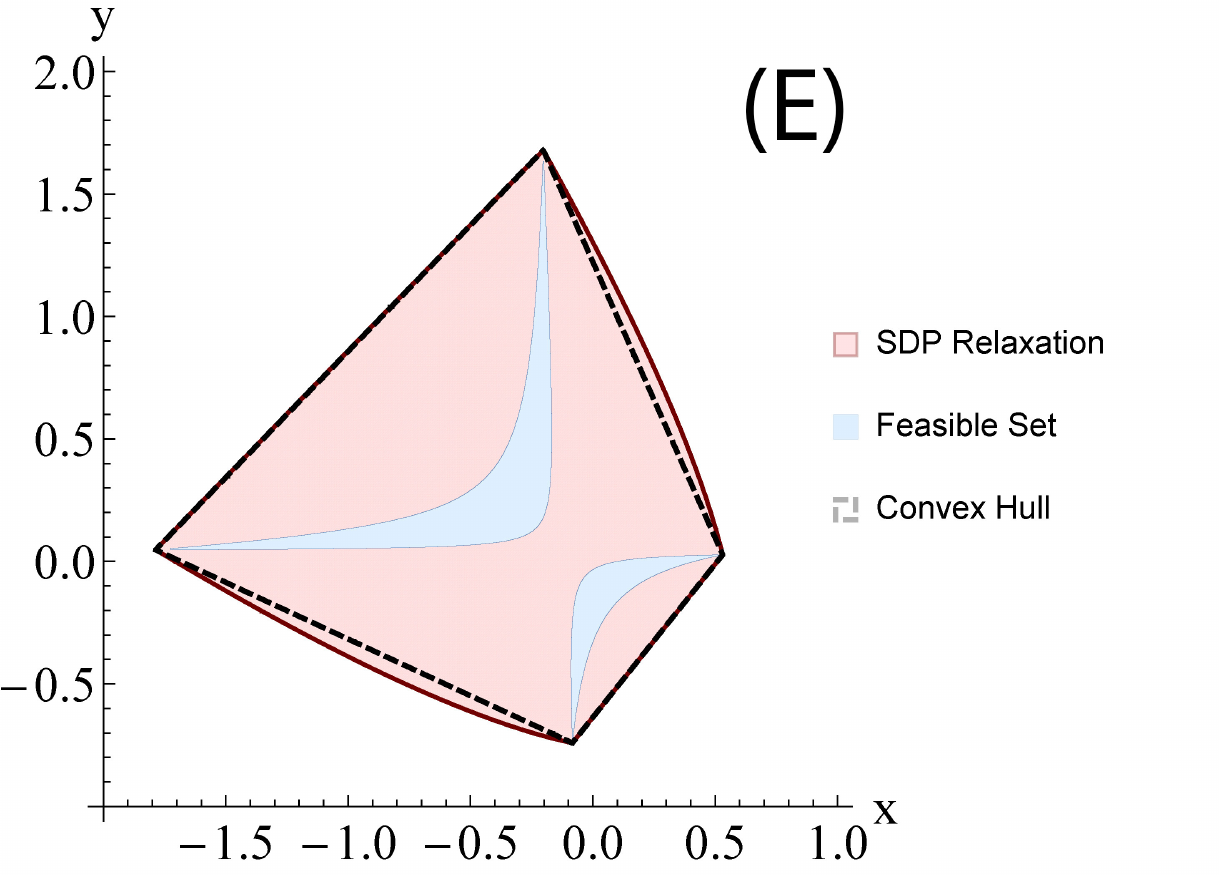}
	\includegraphics[height=0.24\textwidth]{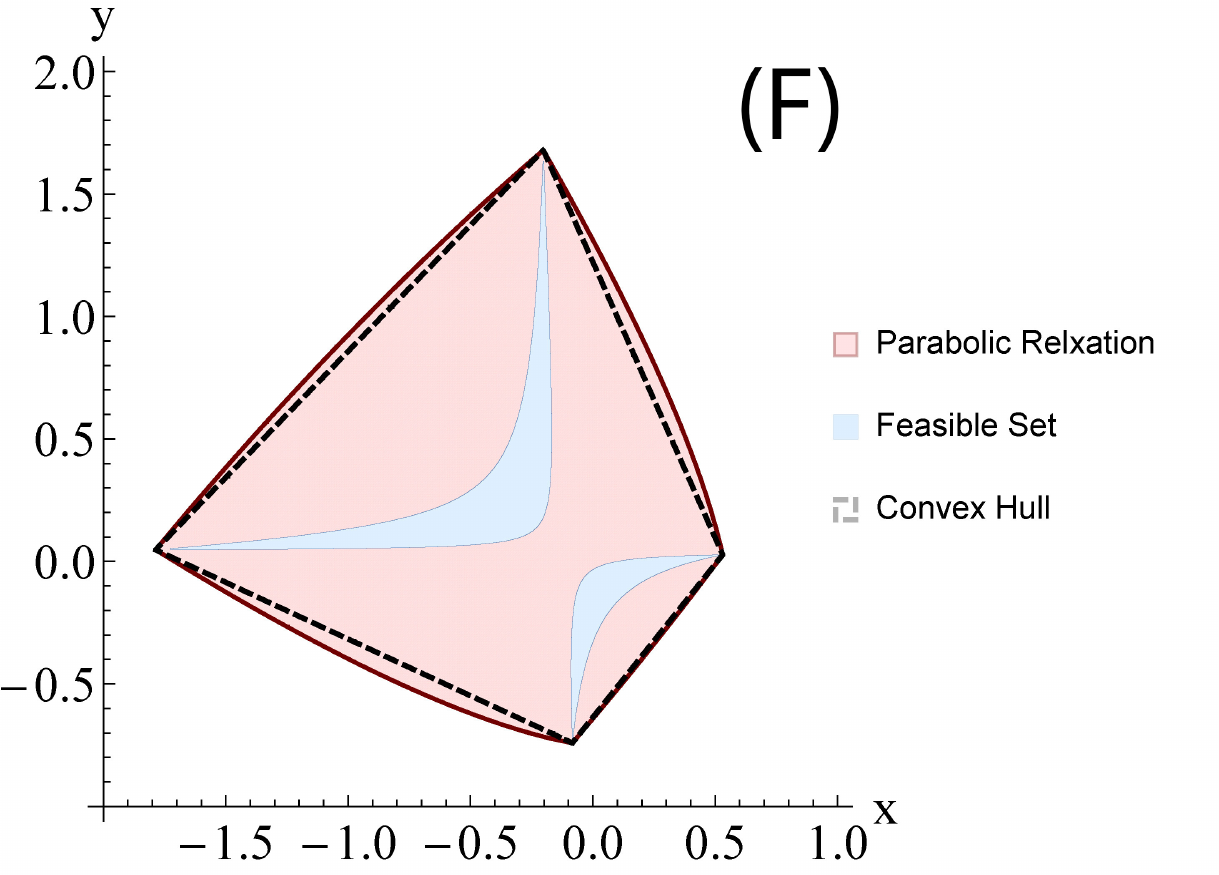}
	\caption{
		Parabolic relaxation and other inner/outer-approximations of SDP cone on the two-dimensional algebraic set defined in \eqref{eq_exmp1}. The original feasible set $\Fcal$ and its convex hull $\mathrm{conv}\{\Fcal\}$ are shown in (A).  The other figures, respectively, demonstrate the regions projected by: 
		(B) SOCP relaxation $\Fcal^{\mathrm{lifted}}\cap\bar{\Scal}^{(2)}_{2,1}$, 
		(C) DD programming $\Fcal^{\mathrm{lifted}}\cap\Scal^{\mathrm{DD}}$, 
		(D) SDD programming $\Fcal^{\mathrm{lifted}}\cap\Scal^{\mathrm{SDD}}$, 
		(E) SDP relaxation $\Fcal^{\mathrm{lifted}}\cap\bar{\Scal}^{(3)}_{2,1}$, and 
		(F) parabolic relaxation $\Fcal^{\mathrm{lifted}}\cap\Pcal_{2,1}$.}
	\vspace{-5mm} 
	\label{fig_para}
\end{figure*}

\subsection{Example 2}
This example demonstrates the application of the sequential penalized parabolic relaxations on the following simple QCQP:
\begin{subequations}
	\begin{align}
		&\hspace{-0.25cm} \underset{\xbf\in\Rbb^5}{\text{minimize}}
		& & x_2^2+7x_5^2-x_2x_3-x_4x_5+2.5x_1+8x_3-x_4  \label{toy_a}\\
		&\hspace{-0.25cm} \text{subject to}
		& & x_1^2-x_3^2+x_1x_4-x_2x_3-x_2x_4-x_4+x_5-1\leq 0, \label{toy_b} \\
		&\hspace{-0.25cm} & & x_1^2+x_2^2+x_3^2+0.5x_2x_5-1.5( x_2+x_5)=0,  \label{toy_c} \\
		&\hspace{-0.25cm} & & x_k\in\{0,1\},\hspace{2.5cm} \forall k\in\{4, 5\}, \label{toy_d}
	\end{align}
\end{subequations}
with the lifted reformulation:
\begin{subequations}
	\begin{align}
		&\hspace{-0.12cm}\underset{\begin{subarray}{l}
				\xbf\in\Rbb^5,~
				\Xbf\in\Sbb_5\end{subarray}}{\text{minimize}}
		& & X_{22}+ 7X_{55}- X_{23}- X_{45}+ 2.5 x_1+ 8x_3- x_4  \label{toy_lifted_a}\\
		&\hspace{-0.12cm} ~\text{subject to}
		& & X_{11}- X_{33}+X_{14}-X_{23}-X_{24}-x_4+x_5-1\leq 0,  \label{toy_lifted_b} \\
		&\hspace{-0.12cm} & &  X_{11}+X_{22}+X_{33}+0.5X_{25}-1.5( x_2+x_5)=0,    \label{toy_lifted_c} \\
		&\hspace{-2.12cm} & &\hspace{-2.12cm}  X_{44}=x_4,\;\;\;\; X_{55}=x_5,\;\;\;\;\max\{0,x_4+x_5-1\}\leq X_{45}\leq \min\{x_4,x_5\}, \label{toy_lifted_d}\\
		&\hspace{-0.12cm} & &  \Xbf = \xbf\xbf\T\!, \label{toy_lifted_e}
	\end{align}
\end{subequations}
in which \eqref{toy_lifted_d} represents integrality constraints and McCormick envelopes. It can be easily verified that $\accentset{\ast}{\xbf}=[-0.2330$, $0.5778$, $-0.6918$, $1,0]^{\!\top}$ is the unique optimal solution of the problem with the objective value $-6.3832$. The relaxations of \eqref{toy_lifted_d} to $(\xbf,\Xbf)\in\Scal^6_{5,1}$ and $(\xbf,\Xbf)\in\Pcal_{5,1}$ (i.e., SDP and parabolic) are both inexact resulting in the objective values $-6.4386$ and $-6.5823$, respectively.

\vspace{3mm}
\paragraph{Mixed-binary search.}
Problem \eqref{toy_a} -- \eqref{toy_d} is a mixed-binary optimization with non-convex quadratic constraints. Using parabolic relaxation, we transform this problem to a mixed-binary convex QCQP which can be fed to these solvers. To this end, one can substitute the constraint \eqref{toy_lifted_e} with
\begin{align}
	(\xbf,\Xbf)\in\{(\xbf,\Xbf)\in\Pcal_{5,1}\;|\;(x_4,x_5)=\{0,1\}^2\},
\end{align}
resulting in a model that is compatible with GUROBI, CPLEX and MOSEK. Interestingly, the relaxation is exact at all four leaves of the binary search. In other words, for every $(i,j)\in\{0,1\}^2$ the substitution of \eqref{toy_lifted_e} with
\begin{align}
	(\xbf,\Xbf)\in\{(\xbf,\Xbf)\in\Pcal_{5,1}\;|\;x_4=i\,\wedge\,x_5=j\},
\end{align}
results in a unique solution $(\xst,\Xst)$ satisfying $\Xst=\xst\xst\T$.
The choices $(i,j)=(0,0),(0,1),(1,0)$, and $(1,1)$ lead to objective values
$-5.3832$,
$-3.5175$,
$-6.3832$ and
$-5.5175$, respectively. 







\vspace{3mm}
\paragraph{Penalization.}
\begin{figure}
	\centering
	\includegraphics[width=1\linewidth]{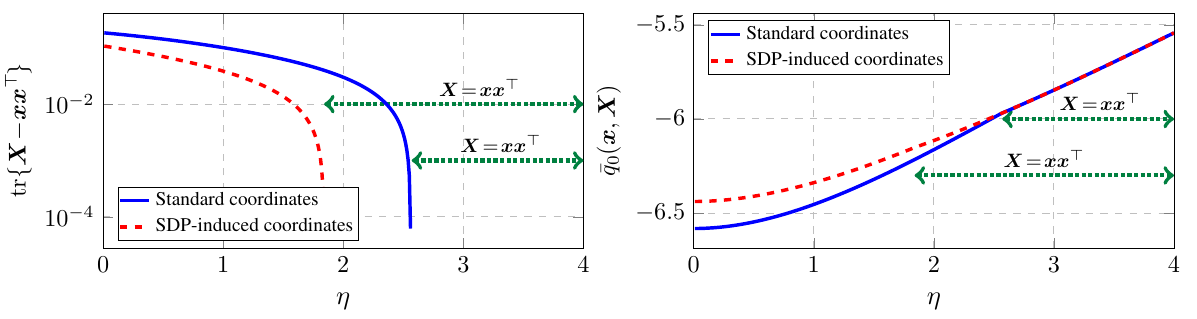}
	\caption{
		Sensitivity of penalization to the choice of penalty parameter $\eta$. For small choices of $\eta$, penalization fails to provide feasible solutions. The feasibility violation $\tr\{\Xbf-\xbf\xbf^{\!\top}\}$ quickly vanishes for the standard and SDP-induced coordinates at $\eta> 2.57$ and $\eta>1.84$, respectively.}
	\label{fig:EtaEffect}
\end{figure}
In order to obtain feasible points from inexact parabolic relaxation, 
we run the Algorithm \ref{al:alg_1}, for ${\begin{bmatrix}0,0,0,0.5,0.5\end{bmatrix}}^{\!\top}$.
The resulting objective values and feasibility violations after the first round for a wide range of $\eta$ values are illustrated in Figure \ref{fig:EtaEffect}. For $\eta=3$, penalization produces a feasible point with the objective value -5.8466.

\begin{table}[t!]
	\centering
	\caption{ Performance of Algorithm \ref{alg:1} on \eqref{toy_a} -- \eqref{toy_d} with starting point 
		$\check{\xbf}=[0,0,0,0.5,0.5]^{\!\top}$, $\eta=2$ and standard coordinates. }
	\scalebox{0.8}{
		\begin{tabular}{|@{\,}c@{\,}| @{\,}c@{\,}| @{\,}c@{\,}| @{\,}c@{\,}|}
			\hline
			 {$\,t\,$} &  {${\xbf}$} &  {$\tr\{ {\Xbf}\!-\!{\xbf}{\xbf}^{\!\top^{\phantom{T}^{\phantom{T}}}}_{\phantom{T}_{\phantom{T}}}\!\!\!\!\!\!\! \}$} &  {$\bar{q}_{\sm{0}{3}}({\xbf}, {\Xbf})$} \\
			\hline
			\hline
			 {0}  &  {${\begin{bmatrix} \phantom{-}0.0000 &    0.0000 &    \phantom{-}0.0000 &    0.5000 &    0.5000\end{bmatrix}}^{\!\top^{\phantom{T}^{\phantom{T}}}}_{\phantom{T}_{\phantom{T}}}\!\!\!\!\!\!$} &  {$--$}   &  {$--$}  \\ \hline
			 {1}  &  {${\begin{bmatrix} -0.2128 &    0.3711 &    -0.6494 &    1.0000 &    0.0309\end{bmatrix}}^{\!\top^{\phantom{T}^{\phantom{T}}}}_{\phantom{T}_{\phantom{T}}}\!\!\!\!\!\!$} &  {$0.0300$}   &  {$-6.1633$}  \\ \hline 
			 {2}  &  {${\begin{bmatrix} -0.2291 &    0.5256 &    -0.6886 &    1.0000 &    0.0098\end{bmatrix}}^{\!\top^{\phantom{T}^{\phantom{T}}}}_{\phantom{T}_{\phantom{T}}}\!\!\!\!\!\!$} &  {$0.0097$}   &  {$-6.3847$}  \\ \hline 
			 {3}  &  {${\begin{bmatrix} -0.2312 &    0.5660 &    -0.6894 &    1.0000 &    0.0000\end{bmatrix}}^{\!\top^{\phantom{T}^{\phantom{T}}}}_{\phantom{T}_{\phantom{T}}}\!\!\!\!\!\!$} &  {$0.0000$}   &  {$-6.3822$}  \\ \hline 
			 {4}  &  {${\begin{bmatrix} -0.2325 &    0.5750 &    -0.6913 &    1.0000 &    0.0000\end{bmatrix}}^{\!\top^{\phantom{T}^{\phantom{T}}}}_{\phantom{T}_{\phantom{T}}}\!\!\!\!\!\!$} &  {$0.0000$}   &  {$-6.3831$}  \\ \hline 
			 {5}  &  {${\begin{bmatrix} -0.2329 &    0.5772 &    -0.6917 &    1.0000 &    0.0000\end{bmatrix}}^{\!\top^{\phantom{T}^{\phantom{T}}}}_{\phantom{T}_{\phantom{T}}}\!\!\!\!\!\!$} &  {$0.0000$}   &  {$-6.3832$}  \\ \hline 
			 {6}  &  {${\begin{bmatrix} -0.2330 &    0.5777 &    -0.6918 &    1.0000 &    0.0000\end{bmatrix}}^{\!\top^{\phantom{T}^{\phantom{T}}}}_{\phantom{T}_{\phantom{T}}}\!\!\!\!\!\!$} &  {$0.0000$}   &  {$-6.3832$}  \\ \hline 
			 {7}  &  {${\begin{bmatrix} -0.2330 &    0.5778 &    -0.6918 &    1.0000 &    0.0000\end{bmatrix}}^{\!\top^{\phantom{T}^{\phantom{T}}}}_{\phantom{T}_{\phantom{T}}}\!\!\!\!\!\!$} &  {$0.0000$}   &  {$-6.3832$}  \\ \hline 
	\end{tabular}}
	\label{tab:Seq_Meth}
\end{table}

\vspace{3mm}
\paragraph{Sequential penalization.}
As shown in Table \ref{tab:Seq_Meth}, higher quality feasible points can be obtained using Algorithm \ref{alg:1} with $\eta=2$. Feasibility is attained after 3 rounds. Distance from optimality is negligible after 7 rounds.

\begin{table}[t!]
	\centering
	\caption{  Performance of Algorithm \ref{alg:1} on \eqref{toy_a} -- \eqref{toy_d} with starting point 
		$\check{\xbf}=[0,0,0,0.5,0.5]^{\!\top}$, $\eta=1$ and SDP-induced coordinates.}
	\scalebox{0.8}{
		\begin{tabular}{|@{\,}c@{\,}| @{\,}c@{\,}| @{\,}c@{\,}| @{\,}c@{\,}|}
			\hline
			 {$\,t\,$} &  {${\xbf}$} &  {$\tr\{\msh{\Xbf}\!-\!{\xbf}{\xbf}^{\!\top^{\phantom{T}^{\phantom{T}}}}_{\phantom{T}_{\phantom{T}}}\!\!\!\!\!\!\!\msh\}$} &  {$\bar{q}_{\sm{0}{3}}({\xbf},\msh{\Xbf})$} \\
			\hline
			\hline
			 {0}  &  {${\begin{bmatrix} \phantom{-}0.0000 &   0.0000 &   \phantom{-}0.0000 &   0.5000 &   0.5000\end{bmatrix}}^{\!\top^{\phantom{T}^{\phantom{T}}}}_{\phantom{T}_{\phantom{T}}}\!\!\!\!\!\!$} &  {$--$}   &  {$--$}  \\ \hline
			 {1}  &  {${\begin{bmatrix}  -0.2308 &   0.4533 &   -0.6968 &   1.0000 &   0.0495\end{bmatrix}}^{\!\top^{\phantom{T}^{\phantom{T}}}}_{\phantom{T}_{\phantom{T}}}\!\!\!\!\!\!$} &  {$0.0384$}   &  {$-6.3385$}  \\ \hline 
			 {2}  &  {${\begin{bmatrix}  -0.2352 &   0.5550 &   -0.7020 &   1.0000 &   0.0190\end{bmatrix}}^{\!\top^{\phantom{T}^{\phantom{T}}}}_{\phantom{T}_{\phantom{T}}}\!\!\!\!\!\!$} &  {$0.0152$}   &  {$-6.3950$}  \\ \hline 
			 {3}  &  {${\begin{bmatrix}  -0.2330 &   0.5745 &   -0.6926 &   1.0000 &   0.0018\end{bmatrix}}^{\!\top^{\phantom{T}^{\phantom{T}}}}_{\phantom{T}_{\phantom{T}}}\!\!\!\!\!\!$} &  {$0.0015$}   &  {$-6.3845$}  \\ \hline 
			 {4}  &  {${\begin{bmatrix}  -0.2329 &   0.5774 &   -0.6917 &   1.0000 &   0.0000\end{bmatrix}}^{\!\top^{\phantom{T}^{\phantom{T}}}}_{\phantom{T}_{\phantom{T}}}\!\!\!\!\!\!$} &  {$0.0000$}   &  {$-6.3832$}  \\ \hline 
			 {5}  &  {${\begin{bmatrix}  -0.2330 &   0.5778 &   -0.6918 &   1.0000 &   0.0000\end{bmatrix}}^{\!\top^{\phantom{T}^{\phantom{T}}}}_{\phantom{T}_{\phantom{T}}}\!\!\!\!\!\!$} &  {$0.0000$}   &  {$-6.3832$}  \\ \hline 
	\end{tabular}}
	\label{tab:SeqI_Ind_Meth}
\end{table}

\vspace{3mm}
\paragraph{SDP-induced coordinates.}
In order to examine the effectiveness of Theorem \ref{thm_cord}, we calculate SDP-induced basis: 
{ \small
\begin{align}
	\wht_1 \! = \! \begin{bmatrix}
		\phantom{-} 0.0000\\ 
		-0.2036\\
		-0.0217\\
		-0.0002\\
		\phantom{-} 0.9788
	\end{bmatrix}\!,
	\wht_2 \! = \! \begin{bmatrix}
		\phantom{-} 0.0000\\
		\phantom{-} 0.0000\\
		\phantom{-} 0.0000\\
		-1.0000 \\
		\phantom{-} 0.0002 
	\end{bmatrix}\!,
	\wht_3 \! = \! \begin{bmatrix}
		\phantom{-} 0.0000 \\
		-0.3170    \\        
		-0.9444            \\
		\phantom{-} 0.0000\\
		-0.0869            
	\end{bmatrix}\!,
	\wht_4 \! = \! \begin{bmatrix}
		-1.0000     \\
		\phantom{-}0.0000               \\
		\phantom{-}0.0000     \\
		\phantom{-}0.0000     \\
		\phantom{-}0.0000             
	\end{bmatrix}\!,
	\wht_5 \! = \! \begin{bmatrix}
		\phantom{-}0.0000  \\
		-0.9263            \\
		\phantom{-}0.3280  \\
		\phantom{-}0.0000  \\
		-0.1855            
	\end{bmatrix}, \nonumber
\end{align}
}
\noindent
via eigen-decomposition of the matrix $\Abf_0+\sum_{k\in\Ecal\cup\Ical}{\hat{\tau}_k\Abf_k}+\sum_{k\in\Vcal}{\hat{\nu}_k\Fbf_{\!k}}$, where $(\hat{\taubf},\hat{\nubf})\in\Rbb^{|\Ecal\cup\Ical|}\!\times\!\Rbb^{|\Vcal|}$ is dual optimal.

As demonstrated in Figure \ref{fig:EtaEffect}, the change of basis allows us to use smaller $\eta$ to achieve feasibility. If $\eta=2$, the objective value is $-6.1146$ which is $4.6\%$ better than penalization using standard basis and $\eta=3$. Additionally, as shown in Table \ref{tab:SeqI_Ind_Meth}, SDP-induced basis allow  Algorithm \ref{alg:1} to arrive to optimality in $5$ rounds using $\eta=1$.

\begin{figure}[t!]
	\centering
	\includegraphics[width=0.35\linewidth]{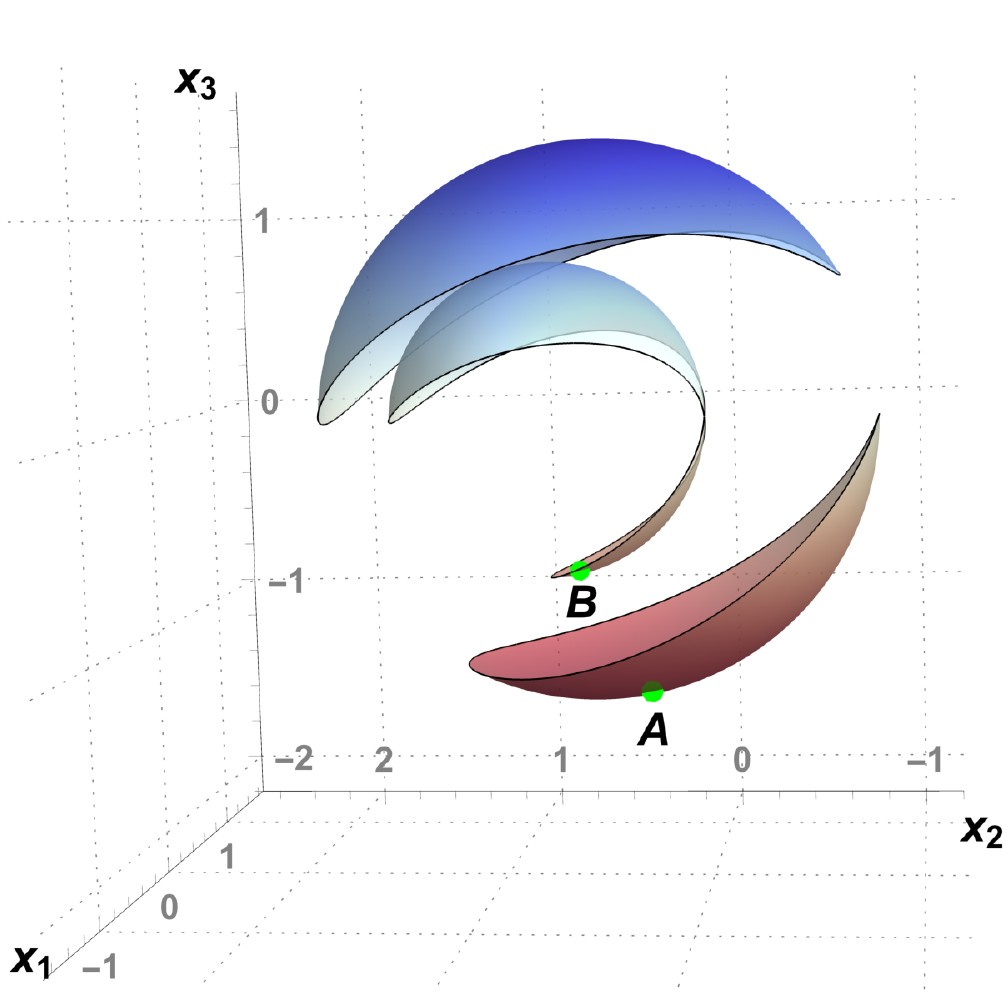} 
	\caption{Feasible set of  problem \eqref{toy_a} -- \eqref{toy_d}. The starting point A and the optimal point, B belong to different connected components of the feasible set.}
	\label{fig:DiscFeas}
\end{figure}

\begin{table}[t!] \vskip -1mm
	\centering
	\caption{Performance of Algorithm \ref{alg:1} for \eqref{toy_a}--\eqref{toy_d} with $\eta=2$ and standard basis.}
	\scalebox{0.8}{
		\begin{tabular}{|@{\,}c@{\,}| @{\,}c@{\,}| @{\,}c@{\,}| @{\,}c@{\,}|}
			\hline
			 {$\,t\,$} &  {${\xbf}$} &  {$\tr\{\msh{\Xbf}\!-\!{\xbf}{\xbf}^{\!\top^{\phantom{T}^{\phantom{T}}}}_{\phantom{T}_{\phantom{T}}}\!\!\!\!\!\!\!\}$} &  {$\bar{q}_{\sm{0}{3}}({\xbf},\msh{\Xbf})$} \\
			\hline
			\hline
			 {0}  &  {${\begin{bmatrix}  -0.3968 &   0.2310 &   -1.2330 &   0.0000 &   1.0000\end{bmatrix}}^{\!\top^{\phantom{T}^{\phantom{T}}}}_{\phantom{T}_{\phantom{T}}}\!\!\!\!\!\!$} &  {$--$}   &  {$--$}  \\ \hline
			 {1}  &  {${\begin{bmatrix}  -0.3462 &   0.3273 &   -1.0676 &   0.3420 &   0.6108 \end{bmatrix}}^{\!\top^{\phantom{T}^{\phantom{T}}}}_{\phantom{T}_{\phantom{T}}}\!\!\!\!\!\!$} &  {$0.4662$}   &  {$-5.3805$}  \\ \hline 
			 {2}  &  {${\begin{bmatrix}  -0.3167 &   0.3987 &   -0.9677 &   1.0000 &   0.4129 \end{bmatrix}}^{\!\top^{\phantom{T}^{\phantom{T}}}}_{\phantom{T}_{\phantom{T}}}\!\!\!\!\!\!$} &  {$0.2424$}   &  {$-6.5111$}  \\ \hline 
			 {3}  &  {${\begin{bmatrix}  -0.2875 &   0.4627 &   -0.8707 &   1.0000 &   0.2476 \end{bmatrix}}^{\!\top^{\phantom{T}^{\phantom{T}}}}_{\phantom{T}_{\phantom{T}}}\!\!\!\!\!\!$} &  {$0.1863$}   &  {$-6.5812$}  \\ \hline 
			 {4}  &  {${\begin{bmatrix}  -0.2641 &   0.5117 &   -0.7939 &   1.0000 &   0.1330 \end{bmatrix}}^{\!\top^{\phantom{T}^{\phantom{T}}}}_{\phantom{T}_{\phantom{T}}}\!\!\!\!\!\!$} &  {$0.1153$}   &  {$-6.5453$}  \\ \hline 
			 {5}  &  {${\begin{bmatrix}  -0.2482 &   0.5443 &   -0.7420 &   1.0000 &   0.0630 \end{bmatrix}}^{\!\top^{\phantom{T}^{\phantom{T}}}}_{\phantom{T}_{\phantom{T}}}\!\!\!\!\!\!$} &  {$0.0590$}   &  {$-6.4790$}  \\ \hline 
			 {6}  &  {${\begin{bmatrix}  -0.2386 &   0.5640 &   -0.7106 &   1.0000 &   0.0233 \end{bmatrix}}^{\!\top^{\phantom{T}^{\phantom{T}}}}_{\phantom{T}_{\phantom{T}}}\!\!\!\!\!\!$} &  {$0.0227$}   &  {$-6.4231$}  \\ \hline 
			 {7}  &  {${\begin{bmatrix}  -0.2331 &   0.5751 &   -0.6928 &   1.0000 &   0.0017 \end{bmatrix}}^{\!\top^{\phantom{T}^{\phantom{T}}}}_{\phantom{T}_{\phantom{T}}}\!\!\!\!\!\!$} &  {$0.0017$}   &  {$-6.3862$}  \\ \hline 
			 {8}  &  {${\begin{bmatrix}  -0.2329 &   0.5773 &   -0.6917 &   1.0000 &   0.0000 \end{bmatrix}}^{\!\top^{\phantom{T}^{\phantom{T}}}}_{\phantom{T}_{\phantom{T}}}\!\!\!\!\!\!$} &  {$0.0000$}   &  {$-6.3832$}  \\ \hline 
			 {9}  &  {${\begin{bmatrix}  -0.2330 &   0.5778 &   -0.6918 &   1.0000 &   0.0000 \end{bmatrix}}^{\!\top^{\phantom{T}^{\phantom{T}}}}_{\phantom{T}_{\phantom{T}}}\!\!\!\!\!\!$} &  {$0.0000$}   &  {$-6.3832$}  \\ \hline 
	\end{tabular}}
	\label{tab:SeqI_Init_Meth}
\end{table}

\vspace{3mm}
\paragraph{Disconnected feasible set.}
Lastly, to assess the sensitivity of Algorithm \ref{alg:1} to the choice of initial point, we employ  $\check{\xbf}=\begin{bmatrix} -0.3968,0.2310,-1.2330,0,1\end{bmatrix}$ which is feasible and locally optimal in the suboptimal connected component $\{\xbf\in\Fcal\;|\;x_4=0\,\wedge\,x_5=1\}$. 
With $\eta=2$ the iterates transition to the correct connected component which contains an optimal point. This procedure is shown in Table \ref{tab:SeqI_Init_Meth} and the initial and terminal points are illustrated in Figure \ref{fig:DiscFeas}. The ability of the algorithm to jump among disconnected components of the feasible region in the original space is particularly advantageous.

\section{Conclusions}
In this paper, we proposed a computationally efficient convex relaxation, named {\it parabolic relaxation,} for non-convex QCQPs. 
Parabolic relaxation includes all feasible points of the QCQP on its boundary, which is a favorable property for recovering feasible points through penalty approaches. While the parabolic relaxation has similar complexity as the common SOCP relaxation, through a basis change, it can be equipped with a bounds as strong as SDP bound. As such, it compares favorably with other relaxations/approximation for QCQP in the literature. In the second in the second part of this work \cite{parabolic_part2}, we give a successive penalized relaxation algorithm and prove its convergence to a KKT point.
The experimental and theoretical results provided show the effectiveness of our proposed approach on benchmark QCQP cases as well as the problem of identifying linear dynamical systems. 

\section{Acknowledgments}
We are grateful to GAMS Development Corporation for providing us with unrestricted access to a full set of solvers throughout the project. 

\bibliographystyle{spbasic}
\bibliography{references}

\end{document}